\theoremstyle{plain}
\newtheorem{theorem}{Theorem}[section]
\newtheorem{proposition}[theorem]{Proposition}
\newtheorem{fundamental fact}[theorem]{Fundamental Fact}
\newtheorem{corollary}[theorem]{Corollary}
\newtheorem{def-thm}[theorem]{Definition-Theorem}
\newtheorem{lemma}[theorem]{Lemma}
\theoremstyle{definition}
\newtheorem{definition}[theorem]{Definition}
\newtheorem{remark}[theorem]{Remark}
\newtheorem{example}[theorem]{Example}
\newtheorem*{acknowledgement}{Acknowledgement}
\newcommand{\CC}{\mathbb{C}}
\DeclareMathOperator{\Ric}{Ric}
\DeclareMathOperator{\Ker}{Ker}
\newcommand{\rminus}{{r^-}}
\newcommand{\R}{{\mathbb R}}
\newcommand{\C}{{\mathbb C}}
\DeclareMathOperator{\kod}{kod}
\begin{document}

\title[On the zero set of holomorphic sectional curvature]{On the zero set of holomorphic sectional curvature}

\author{Yongchang Chen}
\address{Yongchang Chen. Department of Mathematics, University of Houston, 4800 Calhoun Road, Houston, TX 77204, USA} \email{{ychen224@central.uh.edu}}

\author{Gordon Heier}
\address{Gordon Heier. Department of Mathematics, University of Houston, 4800 Calhoun Road, Houston, TX 77204, USA} \email{{heier@math.uh.edu}}

\subjclass[2020]{32Q05, 32Q10, 32Q15, 53C55}
\keywords{Complex manifolds, K\"ahler metrics, semi-definite holomorphic sectional curvature}

\thanks{The second author was supported by a grant from the Simons Foundation (Grant Number 963755-GH)}

\begin{abstract}A notable example due to Heier, Lu, Wong, and Zheng shows that there exist compact complex K\"ahler manifolds with ample canonical line bundle such that the holomorphic sectional curvature is negative semi-definite and vanishes along high-dimensional linear subspaces in every tangent space. The main result of this note is an upper bound for the dimensions of these subspaces. Due to the holomorphic sectional curvature being a real-valued bihomogeneous polynomial of bidegree $(2,2)$ on every tangent space, the proof is based on making a connection with the work of D'Angelo on complex subvarieties of real algebraic varieties and the decomposition of polynomials into differences of squares. Our bound involves an invariant that we call the holomorphic sectional curvature square decomposition length, and our arguments work as long as the holomorphic sectional curvature is semi-definite, be it negative or positive.
\end{abstract}

\maketitle

\section{Introduction}
\label{chap:intro}
\subsection{Background information on earlier work and statements of the main theorems}
On a complex K\"ahler manifold, the relationship between Ricci curvature and holomorphic sectional curvature has long been considered to be somewhat mysterious \cite[Remark 7.20]{Zhengbook}. Several decades ago, S.-T. Yau conjectured that a (projective) K\"ahler manifold with negative holomorphic sectional curvature possesses a (different) K\"ahler metric of negative Ricci curvature, i.e., has ample canonical line bundle. Yau also conjectured that a (projective) K\"ahler manifold with positive holomorphic sectional curvature is rationally connected and even unirational. Not much progress on these types of questions was made until a little over a decade ago, papers by Heier-Lu-Wong \cite{heier_lu_wong_mrl} in the negative curvature case and Heier-Wong \cite{heier_wong_cag} in the positive curvature case revived the topic. For more on the recent history of this area, we refer the reader to the literature (e.g., \cite{heier_lu_wong_mrl}, \cite{heier_wong_cag},  \cite{HLW_JDG}, \cite{Wu_Yau_Invent},\cite{Wu_Yau_CAG}, \cite{Tosatti_Yang}, \cite{Yang}, \cite{HLWZ}, \cite{Diverio_Trapani}, \cite{YZ}, \cite{heier_wong_doc_math}), as we are only focussing on one particular question pertaining to the possible existence of zeros of the holomorphic sectional curvature in this note.\par
During their investigation of the consequences of semi-negative holomorphic sectional curvature, Heier-Lu-Wong \cite[Remark 1.6]{HLW_JDG} raised the question if it is possible for a K\"ahler manifold with ample canonical line bundle to carry a K\"ahler metric whose semi-negative holomorphic sectional curvature has ``many zeros" at each point. Subsequently, Heier-Lu-Wong-Zheng \cite[Example 1.2]{HLWZ} gave the following example which showed that it is possible for the semi-negative holomorphic sectional curvature on a compact K\"ahler manifold with ample canonical line bundle to vanish on large linear subspaces of the tangent spaces at every point. Before we present this example, we give a precise definition of the invariant $r^-$ that captures the size of these subspaces. Note that analogous non-compact examples in the semi-positive holomorphic sectional curvature case where recently constructed in \cite{chen_m_heier_arxiv}.

\begin{definition}
	Let $M$ be a Hermitian manifold with semi-negative holomorphic sectional curvature $H$. For $p\in M$, let $\eta(p)$ be the maximum of those integers $k\in \{0,\ldots,n:=\dim M\}$ such that there exists a $k$-dimensional subspace $L\subset T_p M$ with $H(v)=0$ for all $v\in L\backslash \{0\}$. Set $\eta_M:=\min_{p\in M} \eta(p)$ and $\rminus:=n-\eta_M$. Note that by definition $\rminus=0$ if and only if $H$ vanishes identically. Also, $\rminus=n$ if and only if there exists at least one point $p\in M$ such that $H$ is strictly negative at $p$. Moreover, $\eta(p)$ is upper-semicontinuous as a function of $p$, and consequently the set
	$$\{p\in M\ |\ n-\eta(p)=\rminus\}=\{p\in M \mid \eta(p)=\eta_M\} $$
	is an open set in $M$ (in the classical topology).
\end{definition}\par

\begin{example}{\cite[Example 1.2]{HLWZ}} \label{theta_div_example}Let $A$ be a principally polarized abelian variety of dimension $n+1$, and let $M\subset A$ be a theta divisor. It was proven by Andreotti-Mayer that for generic $A$, the hypersurface $M$ is non-singular, i.e., a  submanifold. Take the standard flat K\"ahler metric on $A$ and restrict it to $M$. By the curvature decreasing property of subbundles, $M$ has semi-negative bisectional curvature and, in particular, $H\leq 0$. By the adjunction formula, the canonical line bundle of $M$ is ample, i.e., $c_1(M) < 0$. On the other hand, the following explicit computation shows that $\rminus=\lfloor\frac{n+1}{2}\rfloor$, which is less than $n$ (when $n$ is at least $2$).\par
	Let $p\in M$ be a fixed point. Let $U$ be a neighborhood of $p$ in $A$ with coordinates $z_0, z_1,\ldots,z_n$ such that $p$ is the origin and $T_pM=\left(\frac{\partial}{\partial z_0}\right)^\perp$. Then $z_1,\dots,z_n$ can be used as local holomorphic coordinates on $M\cap U$ and $M$ defined as a graph by $z_0=f(z_1,\ldots,z_n)$ for some holomorphic function $f$ with $f(0,\ldots,0)=0$ and $df(0,\ldots,0)=0$. The induced metric on $M$ is the graph metric which is given in components as
	$$g_{i\overline{j}} = \delta_{i j} +f_i\overline{f_j},$$
	where $f_i = \frac{\partial f}{\partial z_i}$. From this, we obtain that the components of the curvature tensor at the point $p$ are
	\begin{equation}\label{theta_div_curv_tensor} R_{i\overline{j} k \overline{l}}=-f_{ik}\overline{f_{jl}},\end{equation}
	where $f_{ij} = \frac{\partial^2 f}{\partial z_i\partial z_j}$. Therefore, for a unit vector $v=
	\sum_{i=1}^n v_i \frac{\partial}{\partial z_i} \in T_pM$, the holomorphic sectional curvature of $v$ is
	$$R_{v\overline{v}v\overline{v}}=-|f_{vv}|^2,$$
	where $f_{vv} = \sum_{i,j=1}^n v_iv_j f_{ij}$. Thus, for any non-zero vector $v\in T_pM$, $H(v)=0$ if and only if $f_{vv}=0$. Note that all such $v$ form a quadratic cone $Q$ in $T_p M=\CC^n$. According to \cite[page 735, Proposition]{GH}, every quadratic cone of dimension $m$ contains a linear space of dimension $\lfloor\frac{m-1}{2}\rfloor+1$, which in our situation yields $\rminus\leq \lfloor\frac{n+1}{2}\rfloor$ (note that $m=n-1$). Moreover, if the quadratic cone is non-degenerate, there exist no linear spaces of larger dimension on it. We will conclude the discussion of this example by showing that there exists a point $p$ of $M$ where $Q$ is indeed non-degenerate and can therefore infer that $\rminus= \lfloor\frac{n+1}{2}\rfloor$.\par
	Let us assume that the matrix $(f_{ij})$ is degenerate at $p$. Then there exists a non-zero tangent vector $v\in T_pM$ such that $f_{iv}:=\sum_{j=1}^n v_j f_{ij}=0$ for all $i=1,\ldots,n$. Consequently, by \eqref{theta_div_curv_tensor}, $R_{i\overline{j}v\overline{v}}=0$ for all $i,j=1,\ldots,n$. In particular, the $n$-fold wedge of the Ricci curvature form with itself vanishes at $p$. Now, if this holds for all $p\in M$, then $c_1(M)^n =0$, contradicting the initial observation that $c_1(M)<0$. \hfill $\qed$
\end{example}

It would seem likely that the discrepancy between the positivity or negativity of the canonical line bundle and the amount of zeros of the semi-definite holomorphic sectional curvature cannot exceed a certain size. In the extreme case that the holomorphic sectional curvature on a projective K\"ahler manifold $(M,g)$ vanishes identically, it is known that the entire curvature tensor of $g$ vanishes and $M$ is an abelian variety up to a finite unramified covering, see the comments in the proof of the more general Proposition \ref{incompatibility_thm}.\par

In this note, we offer the following main theorem in this regard. In our theorem's assumptions, we only require the first Chern class of the canonical line bundle to be definite (either positive or negative), so we formulate it in terms of the following invariant $r_0$:

\begin{definition}\label{defr0}
	Let $M$ be a Hermitian manifold with holomorphic sectional curvature $H$. For $p\in M$, let $\eta_0(p)$ be the maximum of those integers $k\in \{0,\ldots,n:=\dim M\}$ such that there exists a $k$-dimensional subspace $L\subset T_x M$ with $H(v)=0$ for all $v\in L\backslash \{0\}$. Set $\eta_0:=\min_{p\in M} \eta_0(p)$ and $r_0:=n-\eta_0$. 
\end{definition}

\begin{theorem}\label{main 1}
Let $M$ be an $n$-dimensional compact K\"ahler manifold whose holomorphic sectional curvature is either positive or negative semi-definite. Assume that the first real Chern class of the canonical line bundle of $M$ is definite with the opposite sign as the holomorphic sectional curvature and that $M$ has holomorphic sectional curvature square decomposition length $N$. Then we have 
$$r_0\ge n-\lfloor\frac{N}{N+1}n\rfloor.$$
\end{theorem}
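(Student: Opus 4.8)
The plan is to reduce the statement to a pointwise problem on a single tangent space, and then to combine a short linear-algebra estimate for linear subspaces lying on an intersection of quadrics with a global argument that forces the existence of a point where these quadrics are nondegenerate. Fix $p\in M$ and work in $T_pM\cong\C^n$ with $g_{i\bar j}(p)=\delta_{ij}$. Replacing $H$ by $-H$ in the semi-negative case (the semi-positive case being identical up to an overall sign), the function $v\mapsto\mp H(v)$ is a nonnegative real bihomogeneous polynomial of bidegree $(2,2)$, and by the definition of the holomorphic sectional curvature square decomposition length we may write $\mp H(v)=\sum_{\alpha=1}^N|q_\alpha(v)|^2$, where each $q_\alpha(v)=\sum_{i,k=1}^n c^\alpha_{ik}v_iv_k$ is a holomorphic quadratic form with symmetric coefficient matrix $c^\alpha$; compare the single square $-|f_{vv}|^2$ appearing in Example \ref{theta_div_example}. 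Following D'Angelo's description of the zero locus of a sum of Hermitian squares, $H(v)=0$ if and only if $q_\alpha(v)=0$ for every $\alpha$, so the zero set of $H$ in $T_pM$ is exactly the cone $V_p=\{q_1=\cdots=q_N=0\}$, and $\eta_0(p)$ is the largest dimension of a linear subspace contained in $V_p$.

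Next I would bound the dimension $k$ of a linear subspace $L\subseteq V_p$. Writing $B_\alpha(v,w)=v^{\mathrm T}c^\alpha w$ for the symmetric bilinear form attached to $q_\alpha$, the vanishing $q_\alpha|_L\equiv 0$ is equivalent to $L$ being totally isotropic for every $B_\alpha$, that is, $c^\alpha(L)\subseteq L^\perp$ for all $\alpha$, where $L^\perp$ is the $(n-k)$-dimensional annihilator of $L$ for the standard bilinear pairing. Consider the linear map $\Phi\colon\C^n\to(\C^n)^N$, $w\mapsto(c^1w,\ldots,c^Nw)$. The previous observation shows $\Phi(L)\subseteq(L^\perp)^N$, a space of dimension $N(n-k)$; hence, if $\Phi$ is injective, so is its restriction to $L$, and comparing dimensions gives $k\le N(n-k)$, i.e. $k\le\frac{N}{N+1}n$. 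As $k$ is an integer this yields $\eta_0(p)\le\lfloor\frac{N}{N+1}n\rfloor$, and since $\eta_0\le\eta_0(p)$ we obtain $r_0=n-\eta_0\ge n-\lfloor\frac{N}{N+1}n\rfloor$, provided $\Phi$ is injective at some $p$. For $N=1$ this recovers the bound $r_0\ge\lceil n/2\rceil$ that is sharp for the theta-divisor example.

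It remains to produce a point $p$ at which $\Phi$ is injective, and this is where the Chern class hypothesis enters; I expect this step to be the main obstacle. Matching coefficients in $\mp H=\sum_\alpha|q_\alpha|^2$ with the holomorphic sectional curvature of the curvature tensor forces $\mp R_{i\bar jk\bar l}=\sum_\alpha c^\alpha_{ik}\overline{c^\alpha_{jl}}$, and tracing over the first pair of indices gives
\[
\mp\Ric_{k\bar l}=\sum_{\alpha=1}^N\sum_{i=1}^n c^\alpha_{ik}\,\overline{c^\alpha_{il}},\qquad\text{so}\qquad \mp\Ric(w)=\sum_{\alpha=1}^N\|c^\alpha w\|^2 .
\]
Thus the Ricci form has a fixed sign, matching that of $\mp H$, and is nondegenerate at $p$ precisely when $\bigcap_\alpha\Ker c^\alpha=\{0\}$, which is exactly the injectivity of $\Phi$ at $p$. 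If $\Phi$ were injective at no point of $M$, the Ricci form of $g$ would be degenerate everywhere, its $n$-fold wedge would vanish identically, and consequently $c_1(K_M)^n=0$; but a definite class has nonzero top self-intersection, contradicting the hypothesis that $c_1(K_M)$ is definite with sign opposite to that of $H$. This is exactly the mechanism used at the end of Example \ref{theta_div_example}, and upgrading it from the single-square case to arbitrary $N$ is the technical heart of the argument. The one further point needing care is the passage from semi-definiteness to the pure sum-of-squares representation underlying the definition of $N$: in the general D'Angelo framework one works with differences of squares, and I would need to confirm that the zero-set description of $V_p$ and the Ricci identity above persist, or else argue directly from the representation furnished by the definition of the decomposition length.
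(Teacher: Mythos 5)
There is a genuine gap, and it is precisely the one you flag in your final sentence: your entire argument runs through the representation $\mp H(v)=\sum_{\alpha=1}^N|q_\alpha(v)|^2$ as a \emph{pure sum} of squares, but Definition \ref{sqdl_def} only furnishes a \emph{difference} of squares $H=\sum_p|f_p|^2-\sum_p|g_p|^2$, and semi-definiteness does not allow you to discard the negative part. As the paper's introduction emphasizes (citing D'Angelo's examples such as $(|z_1|^2-|z_2|^2)^2$), a semi-definite real-valued bihomogeneous polynomial in several complex variables need not be a sum of squares, nor even a quotient of sums of squares; so assuming the $g$-part is absent amounts to proving the theorem under a strictly stronger hypothesis than the one stated. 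With a genuine difference of squares, both pillars of your proof collapse: the zero set of $H$ in $T_pM$ is no longer $\{q_1=\dots=q_N=0\}$ (it is the much larger set where $\sum_p|f_p|^2=\sum_p|g_p|^2$), and the traced identity becomes $\mp\Ric(w)=\sum_{\alpha}\bigl(\|f^\alpha w\|^2-\|g^\alpha w\|^2\bigr)$, which has no fixed sign, so nondegeneracy of $\Ric$ can no longer be read off from the injectivity of your map $\Phi$.

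The paper repairs exactly these two failures, and this is the content your proposal is missing. First, instead of describing the zero set, it invokes D'Angelo's Theorem \ref{superset}: any complex linear subspace $L$ lying in the zero set of $H$ is contained in $V(f-Ug)$ for some $N\times N$ unitary matrix $U$ depending on $L$, so the quadrics to which the linear algebra is applied are the twisted forms $f^p-\sum_j u^p_j g^j$; the unitarity of $U$ is then what produces the cancellation $R_{i\bar j v\bar v}=0$ for a vector $v$ in the common kernel of these twisted quadrics. Second, since $\Ric$ is not semi-definite in the difference case, knowing $R_{i\bar j v\bar v}=0$ for all $i,j$ does not yet degenerate the Ricci form; the paper needs Lemma \ref{up} --- and this is where the semi-definiteness of $H$ and the K\"ahler symmetries genuinely enter --- to upgrade this to $R_{i\bar j k\bar v}=0$ for all $i,j,k$, whence $v$ lies in the kernel of the Ricci Hermitian form, $\Ric^n$ vanishes pointwise, and $c_1^n(M)=0$ contradicts definiteness, just as in your closing step. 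On the positive side, your linear-algebra core is a nice simplification: the observation that $c^\alpha(L)\subseteq L^\perp$ makes $\Phi|_L$ map into an $N(n-k)$-dimensional space gives a two-line proof of the paper's Corollary \ref{intersection ker} (the paper instead goes through rank bounds for linear subspaces of a single quadric), and it applies verbatim to the twisted quadrics; but without Theorem \ref{superset} and Lemma \ref{up}, your argument only proves the theorem when $\mp H$ happens to be a sum of squares, not under the stated hypotheses.
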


For the definition of {\it holomorphic sectional curvature square decomposition length}, see Definition \ref{sqdl_def}. The symbols $\lfloor \cdot \rfloor$ denote the round down of a rational number.\par

The general philosophy expressed here is that the discrepancy between the nature of the canonical line bundle and the zero values of the holomorphic sectional curvature cannot be too big. Therefore, the above main theorem can be refined to the following statements in terms of the numerical invariants defined in Section \ref{pos_notions}. First, without an assumption of projectivity, we formulate a theorem in terms of $n_R(M)$, measuring the kernel dimensions of the curvature tensor $R$.
\begin{theorem}\label{main 2}
	For an $n$-dimensional compact K\"ahler manifold $M$ whose holomorphic sectional curvature is either positive or negative semi-definite, we have 
$$r_0\ge n-\lfloor\frac{Nn+(n-n_{R}(M))}{N+1}\rfloor.$$
\end{theorem}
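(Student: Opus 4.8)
The plan is to reduce the statement to a pointwise linear‑algebra problem on a single well‑chosen tangent space, and then to run the same argument as in Theorem~\ref{main 1}, except that the kernel of the curvature tensor is now retained rather than eliminated by a positivity hypothesis. I treat the negative semi-definite case $H\le 0$; the positive case is identical after a global sign change. Since $\eta_0=\min_{p\in M}\eta_0(p)$, it suffices to exhibit one point $p$ at which every linear subspace $L\subset T_pM$ with $H|_L\equiv 0$ satisfies $\dim L\le\lfloor\frac{Nn+(n-n_R(M))}{N+1}\rfloor$. I would choose $p$ in the open dense locus where the curvature kernel is smallest, so that $\dim\ker R_p$ attains its minimal value $n-n_R(M)$ (the defining property of $n_R(M)$ from Section~\ref{pos_notions}), and where the square decomposition length is realized. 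At such a $p$, Definition~\ref{sqdl_def} furnishes linearly independent holomorphic quadratic forms $f_1,\dots,f_N$ on $T_pM\cong\CC^n$ with $-H=\sum_{i=1}^N|f_i|^2$, so that $H(v)=0$ if and only if $f_1(v)=\cdots=f_N(v)=0$; in particular $L\subset\{H=0\}$ precisely when $L$ is totally isotropic for every $f_i$.

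Next I would linearize the isotropy condition. Writing $B_i$ for the symmetric bilinear form polarizing $f_i$, the condition $f_i|_L\equiv 0$ is equivalent to $B_i(L,L)=0$, so for each $v\in L$ the functional $B_i(v,\cdot)$ vanishes on $L$ and descends to $(T_pM/L)^*$. Assembling these over $i$ gives a linear map
\[
\Psi\colon L\longrightarrow\bigoplus_{i=1}^N (T_pM/L)^*,\qquad v\longmapsto\bigl(B_i(v,\cdot)\bmod L\bigr)_{i=1}^N,
\]
whose target has dimension $N(n-\dim L)$ and whose kernel is $L\cap\bigcap_{i=1}^N\operatorname{rad}(B_i)$. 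Rank–nullity then yields
\[
\dim L\le \dim\Bigl(L\cap\bigcap_{i=1}^N\operatorname{rad}(B_i)\Bigr)+N\,(n-\dim L).
\]

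The crucial identification is $\bigcap_{i=1}^N\operatorname{rad}(B_i)=\ker R_p$. Expanding $-H=\sum_i|f_i|^2$ and matching coefficients after symmetrization shows $R_{i\bar jk\bar l}=-\sum_{s=1}^N c^{(s)}_{ik}\overline{c^{(s)}_{jl}}$, where $c^{(s)}$ is the matrix of $f_s$; one inclusion is immediate, and the reverse uses precisely the linear independence of the $f_i$ guaranteed by the minimality built into $N$, which forces the vectors $(c^{(s)}_{jl})_s$ to span $\CC^N$. Granting this, $\dim(L\cap\bigcap_i\operatorname{rad} B_i)\le\dim\ker R_p=n-n_R(M)$, and the displayed inequality rearranges to $(N+1)\dim L\le Nn+(n-n_R(M))$. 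Taking the floor and using $\eta_0\le\eta_0(p)=\dim L_{\max}$ gives $\eta_0\le\lfloor\frac{Nn+(n-n_R(M))}{N+1}\rfloor$, hence $r_0=n-\eta_0\ge n-\lfloor\frac{Nn+(n-n_R(M))}{N+1}\rfloor$. Note that this specializes to Theorem~\ref{main 1}: definiteness of $c_1$ forces $\ker R_p=0$ at the good point, i.e. $n_R(M)=n$.

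The main obstacle I anticipate is the simultaneous control at a single point $p$ of (i) the existence of the pure square decomposition with length exactly $N$ and (ii) the minimality of $\dim\ker R_p=n-n_R(M)$; both quantities are governed by ranks of Hermitian objects and should be locally constant on a common open dense subset by semicontinuity (mirroring the upper‑semicontinuity of $\eta$ recorded after the definition of $r^-$), but this matching must be made precise. The only genuinely algebraic input beyond bookkeeping is the kernel identification $\bigcap_i\operatorname{rad}(B_i)=\ker R_p$, whose nontrivial direction rests entirely on the linear independence of $f_1,\dots,f_N$; everything else is rank–nullity together with the optimization of the floor.
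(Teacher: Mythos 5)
Your proposal has a genuine gap at its foundation: you read Definition \ref{sqdl_def} as providing a \emph{sum}-of-squares decomposition $-H=\sum_{i=1}^N|f_i|^2$, but the definition (and the whole point of the paper's approach) is a \emph{difference} of squares, $H(v)=\sum_{p}|f^p(v)|^2-\sum_{p}|g^p(v)|^2$. A semi-definite real-valued bihomogeneous polynomial cannot in general be written as a sum of squares, or even a quotient of sums of squares --- this is exactly what Section 1.2 of the paper recalls via D'Angelo's examples such as $(|z_1|^2-|z_2|^2)^2$, and it is why the invariant $N$ is defined through differences of squares in the first place. Once $H$ is only a difference of squares, its zero set in $T_pM$ is merely a \emph{real} algebraic variety: a complex linear subspace $L\subseteq\{H=0\}$ need not satisfy $f^p|_L\equiv 0$ and $g^p|_L\equiv 0$, so your equivalence ``$H|_L\equiv 0$ iff $L$ is totally isotropic for every $f_i$,'' the map $\Psi$, and the identification $\bigcap_i\operatorname{rad}(B_i)=\ker R_p$ all rest on a premise that fails. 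The paper bridges this gap with D'Angelo's Theorem \ref{superset}: the complex subspace $L$ must lie in $V(f-Ug)$ for some $N\times N$ unitary matrix $U$, which produces $N$ \emph{holomorphic} quadrics through $L$ whose coefficients mix the $f$'s and $g$'s. Moreover, even after that repair, a common kernel vector $v$ of these quadrics only yields $R_{i\bar j v\bar v}=0$ (via Lemma \ref{recovery} and unitarity of $U$); to conclude that $v$ lies in the curvature kernel of Definition \ref{R_ker_def} --- which is what contradicts the definition of $n_R(M)$ in the paper's proof --- one needs Lemma \ref{up}, whose proof uses the semi-definiteness of $H$ and the K\"ahler symmetries in an essential way. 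In your sum-of-squares world this step is invisible precisely because it is trivial there.

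Two smaller points. First, your anticipated obstacle (i) is not actually an obstacle: since $N$ is defined as a maximum over points of the minimal length, Definition \ref{sqdl_def} furnishes a decomposition of length at most $N$ at \emph{every} point, so no semicontinuity matching is needed, and your strategy of working at a single point where $\dim\ker R_p=n-n_R(M)$ is legitimate (the paper instead argues by contradiction at every point; both logics are fine). Second, your rank--nullity linearization is genuinely nice: applied to the quadrics $Q_p$ produced by Theorem \ref{superset}, the map $\Psi\colon L\to\bigoplus_{p}(T_xM/L)^*$ gives
\begin{equation*}
\dim L\le \dim\Bigl(L\cap\bigcap_{p=1}^N\Ker Q_p\Bigr)+N(n-\dim L),
\end{equation*}
which recovers Corollary \ref{intersection ker 2} in one stroke and is more elementary than the route through Theorem \ref{max dim linear quadric}. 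So the linear algebra in your proposal could replace the paper's quadric lemmas, but it cannot replace the two ingredients you omitted --- Theorem \ref{superset} and Lemma \ref{up} --- and those are the heart of the proof.
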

In the case of the holomorphic sectional curvature being semi-negative on a projective K\"ahler manifold, based on the inequality \eqref{invar_ineq}, we immediately obtain the following corollary in terms of the numerical dimension $\nu(M)$. Note that it was proven in \cite[Theorem 1.4]{HLW_JDG} that the canonical line bundle of a projective K\"ahler manifold with semi-negative holomorphic sectional curvature is nef.
\begin{corollary}\label{cor_main 2}
	For an $n$-dimensional projective K\"ahler manifold whose holomorphic sectional curvature is semi-negative, we have $$r_0\ge n-\lfloor\frac{Nn+(n-\nu(M))}{N+1}\rfloor.$$
\end{corollary}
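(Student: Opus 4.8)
The plan is to obtain Corollary \ref{cor_main 2} as a direct consequence of Theorem \ref{main 2} together with the comparison inequality \eqref{invar_ineq} relating the curvature-kernel invariant $n_{R}(M)$ to the numerical dimension $\nu(M)$. First I would check that all hypotheses are in force: since $M$ is a projective K\"ahler manifold with semi-negative holomorphic sectional curvature, the cited result \cite[Theorem 1.4]{HLW_JDG} guarantees that the canonical line bundle of $M$ is nef, so that the numerical dimension $\nu(M)$ is well-defined and the inequality \eqref{invar_ineq} is available.

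Next I would apply Theorem \ref{main 2}, whose hypotheses hold here because semi-negative holomorphic sectional curvature is in particular semi-definite. This yields the bound
$$r_0 \ge n - \left\lfloor \frac{Nn + (n - n_{R}(M))}{N+1} \right\rfloor.$$
The content of the corollary is then to replace $n_{R}(M)$ by the more geometric invariant $\nu(M)$. For this I invoke \eqref{invar_ineq}, which provides $\nu(M) \le n_{R}(M)$. Heuristically, this reflects that the Ricci form represents $c_1(K_M)$ (up to sign) and arises as a contraction of the curvature tensor $R$, so that a large kernel of $R$ forces the rank of the Ricci form to drop and hence high powers of $c_1(K_M)$ to vanish, bounding $\nu(M)$ from above by $n_{R}(M)$.

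Finally I would chain the inequalities by monotonicity. From $\nu(M) \le n_{R}(M)$ we get $n - n_{R}(M) \le n - \nu(M)$, hence
$$\frac{Nn + (n - n_{R}(M))}{N+1} \le \frac{Nn + (n - \nu(M))}{N+1}.$$
Since the floor function is non-decreasing and $N+1 > 0$, this gives
$$\left\lfloor \frac{Nn + (n - n_{R}(M))}{N+1} \right\rfloor \le \left\lfloor \frac{Nn + (n - \nu(M))}{N+1} \right\rfloor,$$
and subtracting both sides from $n$ reverses the inequality. Combining with the bound from Theorem \ref{main 2} produces
$$r_0 \ge n - \left\lfloor \frac{Nn + (n - n_{R}(M))}{N+1} \right\rfloor \ge n - \left\lfloor \frac{Nn + (n - \nu(M))}{N+1} \right\rfloor,$$
which is exactly the asserted inequality. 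The only substantive ingredient, and therefore the main obstacle, is the comparison \eqref{invar_ineq} itself; once the precise definitions of $n_{R}(M)$ and $\nu(M)$ from Section \ref{pos_notions} make that inequality available, the remaining steps are purely formal, which is why the corollary is stated as following immediately.
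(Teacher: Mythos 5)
Your proposal is correct and follows essentially the same route as the paper: the corollary is obtained by applying Theorem \ref{main 2} (semi-negative being a case of semi-definite, with nefness of $K_M$ from \cite[Theorem 1.4]{HLW_JDG} making $\nu(M)$ available) and then weakening the bound via $n_R(M)\ge \nu(M)$ from \eqref{invar_ineq}. The paper leaves the monotonicity-of-the-floor chaining implicit as ``immediate''; you have merely spelled it out.
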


\subsection{Introductory comments on the (semi-)positivity of polynomials}
The new direction pursued in this note is to investigate holomorphic sectional curvature via the properties it enjoys by virtue of being a real-valued bihomogeneous polynomial on every tangent space. It therefore seems worthwhile to include a few brief comments about (semi-)positive (and thereby also (semi-)negative) polynomials in general.

For polynomials, (semi-)positivity conditions and being a sum of squares are closely related, although they are not quite equivalent in general and the precise nature of the problem is quite delicate.\par

In the case of real polynomials in one variable, we do have an equivalence. Namely,  for $f \in \R[x]$, the inequality $f(x)\ge 0$ holds for all $x \in \R$ if and only if  $f(x)=p^2(x)+q^2(x)$, for some $p(x),q(x) \in \R[x]$. The proof is well-known and is left as an exercise.\par

However, such an equivalence fails in the case of a real polynomial in several variables, i.e., an element of $\R[x_1,\dots,x_n]$. In fact, a well known counterexample is given by the so-called Motzkin polynomial $f(x, y)=x^4y^2+x^2y^4-3x^2y^2+1$. Although $f(x, y)\ge 0$ for all $(x, y) \in \R^2$, it cannot be written as sum of any number of squares (see \cite{Motzkin}).\par

A precise characterization of the semi-positivity for elements of $\R[x_1,\dots,x_n]$ is given by Hilbert's seventeenth problem:

For $f \in \R[x_1,\dots, x_n]$, the inequality $f(x_1,\dots, x_n)\ge 0$ holds for all $(x_1,\dots, x_n) \in \R^n$ if and only if $$f(x_1,\dots, x_n)=\frac{p_1^2(x_1,\dots, x_n)+\dots+p_N^2(x_1,\dots, x_n)}{q_1^2(x_1,\dots, x_n)+\dots+q_N^2(x_1,\dots, x_n)},$$ for some positive integer $N$ and $p_i(x_1,\dots, x_n),q_i(x_1,\dots, x_n) \in \R[x_1,\dots, x_n]$. Originally posed as an open problem by Hilbert in his 1900 list of problems, this statement is a theorem due to Emil Artin. Later works by other authors address questions of effectivity in finding this decomposition. We refer the reader to the literature for more on this. \par

Moving on to the realm of complex numbers, for real-valued bihomogeneous polynomials $f(z, \bar z)$, there is a characterization for strict positivity due to D'Angelo \cite[Theorem 1]{D'Angelo 00} as follows. The inequality $f(z, \bar z)>0$ holds for all $z\ne 0\in \C^n$ if and only if there is a non-negative integer $d$ and a positive definite Hermitian matrix $(E_{\mu\nu})$, such that 
$$f(z,\bar z)=\frac{\sum E_{\mu\nu}z^\mu\bar z^\nu}{\|z\|^{2d}}.$$

On the other hand, for semi-positive real-valued bihomogeneous polynomials, it is impossible in general to write them as quotients of sums of squares. The reference \cite[pp.~171--172]{D'Angelo 00} gives two examples, namely $(|z_1|^2-|z_2|^2)^2\ge 0$ and $(|z_1z_2|^2-|z_3|^4)^2+|z_1|^8\ge 0$ for all $(z_1,z_2)\in \C^2$, and shows they cannot be written as a quotient of sums of squares as they violate the following necessary condition for being a quotient of sums of squares:

For $f(z, \bar z)=\frac{||p(z)||^2}{||q(z)||^2}$, consider the germ of a holomorphic mapping $t\mapsto z(t)$ from $\C$ to $\C^n$ such that the order of vanishing of the numerator is at least the order of vanishing of the denominator. Then the pullback function $t\mapsto p(z(t), \overline{z(t)})$ is either identically 0, or vanishes to finite even order and its initial form is independent of the argument of $t$.

Consequently, in the consideration of semi-definite holomorphic sectional curvature, the notion of being a sum of squares (or even quotient of sum of squares) is not general enough. We were therefore naturally led to the discussion in \cite{D'Angelobook} where decomposition methods into differences of squares are discussed, and it is this method of decomposition into differences that we apply in this note to the study of semi-definite holomorphic sectional curvature.

\section{Definitions, Notations and Preliminary Material}
\label{chap:defsandprel}

\subsection{Curvature}
A complex manifold will be denoted by $M$, its complex dimension by $n$. For a local chart of $M$, $z=(z_1,\dots,z_n)$ will be used to denote its coordinates. The corresponding bases for the holomorphic and antiholomorphic tangent spaces will be denoted by $\frac{\partial}{\partial z_1},\ldots,\frac{\partial}{\partial z_n}$ and $\frac{\partial}{\partial\bar z_1},\ldots, \frac{\partial}{\partial\bar z_n}$, respectively. The symbols $dz_1,\ldots,dz_n$ and $d\bar z_1,\ldots,d\bar z_n$ denote the dual bases for the holomorphic and antiholomorphic cotangent spaces.

The canonical line bundle of $M$, which is the top wedge of the holomorphic cotangent bundle, will be denoted by $K$. The first Chern class of $M$ is defined to be $c_1(-K)$ and will be denoted by $c_1(M)$.

A Hermitian metric on $M$ will be denoted by $g$. In local coordinates it is of the form
\begin{equation*}
	g=\sum_{i,j} g_{i\bar j} dz_i\otimes d\bar{z}_j.
\end{equation*}
Here the summation runs from $1$ to $n$ tacitly and $(g_{i\bar j})$ is Hermitian. It is customary to denote by 
$$\omega=\frac{\sqrt{-1}}{2}\sum_{i,j} g_{i\bar{j}} dz_i\wedge d\bar{z}_{j}$$ 
the {\it (1,1)-form associated to $g$}. The metric $g$ is said to be {\it K\"ahler} if the exterior derivative $d\omega = 0$.

For a Hermitian metric, there is a unique distinguished connection on the holomorphic tangent bundle, known as the {\it Chern connection}, associated to it. The curvature associated to this connection will be denote by $R$. In local coordinates, the curvature $4$-tensor $R_{i\bar jk\bar l}$ is given by the formula:
\begin{equation*}
	R_{i\bar j k \bar l}=-\frac{\partial^2 g_{i\bar j}}{\partial z_k\partial \bar z _l}+\sum_{p,q=1}^n g^{p\bar q}\frac{\partial g_{i\bar p}}{\partial z_k}\frac{\partial g_{q\bar j}}{\partial \bar z_l}.
\end{equation*}
Here $g^{p\bar q}$ refers to the inverse matrix of $(g_{i\bar j})$ (see \cite[p.\ 1104, \S 1]{Wu}). To distinguish it from other types of curvature derived from it, we will also call $R$ the {\it full curvature}.

From this formula, we immediately have a symmetry property for the curvature involving conjugation, namely
\begin{equation}\label{symm}
	\overline{R_{j\bar il\bar k}}=R_{i\bar jk\bar l}.
\end{equation}
For a K\"ahler metric, its curvature possesses the further symmetry properties
\begin{equation} \label{Ksymm}
	R_{i\bar jk\bar l}=R_{k\bar ji\bar l}=R_{i\bar lk\bar j}.
\end{equation}
If $g$ is K\"ahler, there is only one notion of Ricci curvature (for the definitions of the first, second and third Ricci curvatures in the Hermitian case, see \cite[p.~181]{Zhengbook}), which amounts to defining the \textit{Ricci
	curvature form} to be
\begin{equation*}
	Ric=-\sqrt{-1}\partial\bar\partial \log\det(g_{i\bar{j}}).
\end{equation*}
Moreover, if the tangent vector $v=\sum_{i=1}^n v_i e_i$ and curvature tensor are written in terms of a unitary frame $e_1,\ldots,e_n$, then 
\begin{equation*}
	Ric(v):=Ric(v,\bar v)=\sum_{i,j,k=1}^n R_{i\bar j k \bar k}v_i\bar v_j.
\end{equation*}
By a result of Chern, the class of the form $\frac{1}{2\pi}Ric$ is equal to $c_1(M)$.\par
The {\it scalar curvature} $S$ is defined to be the trace of the Ricci curvature with respect to the metric $g$, i.e., the function
\begin{equation*}
	\sum_{i=1}^n Ric(e_i)=\sum_{i,j=1}^n R_{i\bar i j \bar j}.
\end{equation*}
 It follows from linear algebra and the
definition of scalar curvature that
$$
Ric \wedge \omega^{n-1} = \tfrac {2}{ n} S\, \omega^n.
$$

\par

If $v=\sum_{i=1}^n v_i \frac{\partial }{\partial z_i}$ is a non-zero tangent vector at $p\in M$, then the {\it holomorphic sectional curvature} $H(v)$ is given by the real number
\begin{equation}\label{HSC}
	H(v)=\left(\sum_{i,j,k,l=1}^n R_{i\bar j k \bar l}(p)v_i\bar v_jv_k\bar v_l\right) / \left(\sum_{i,j,k,l=1}^ng_{i\bar j}g_{k\bar l} v_i\bar v_jv_k\bar v_l\right).
\end{equation}
An important fact about holomorphic sectional curvature is the following. If $M'$ is a submanifold of $M$, then the holomorphic sectional curvature of $M'$ does not exceed that of $M$. To be precise, if $v$ is a non-zero tangent vector to $M'$, then
\begin{equation*}
	H'(v)\leq H(v),
\end{equation*}
where $H'$ is the holomorphic sectional curvature associated to the metric on $M'$ induced by $g$. This property was key in the construction of Example \ref{theta_div_example}. For a short direct proof of this inequality see \cite[Lemma 1]{Wu}. Basically, the inequality is an immediate consequence of the Gauss-Codazzi equation.\par
We have the following pointwise result due to Berger \cite{Berger} (see \cite{Hall_Murphy} for a recent new approach).
\begin{theorem}[\cite{Berger}]\label{berger_theorem}
	Let $M$ be a compact manifold with a K\"ahler metric of semi-negative holomorphic sectional curvature. Then the scalar curvature function 
	$S$ is also semi-negative everywhere on $M$. Moreover, let $p\in M$ and assume that there exists $w\in T_pM\backslash \{\vec{0}\}$ such that $H(w) < 0$. Then $S(p) < 0$.
\end{theorem}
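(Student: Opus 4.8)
The plan is to exploit the classical fact, essentially Berger's original averaging argument, that on a Kähler manifold the scalar curvature at a point is a fixed positive multiple of the average of the holomorphic sectional curvature over the unit sphere in the tangent space. First I would fix $p\in M$ and choose a unitary frame $e_1,\dots,e_n$ at $p$, so that $g_{i\bar j}=\delta_{ij}$ and, for a unit vector $v=\sum_i v_i e_i$, the denominator in \eqref{HSC} equals $(\sum_i|v_i|^2)^2=1$ and the holomorphic sectional curvature becomes the honest polynomial $H(v)=\sum_{i,j,k,l} R_{i\bar j k\bar l}(p)\,v_i\bar v_j v_k\bar v_l$. Averaging this expression over the unit sphere $\mathbb{S}^{2n-1}\subset T_pM\cong\CC^n$ against the rotation-invariant probability measure $d\sigma$ reduces the whole question to evaluating the fourth moments $\int_{\mathbb{S}^{2n-1}} v_i\bar v_j v_k\bar v_l\, d\sigma$.

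The key computation, which I expect to be the technical heart, is the evaluation of these moments. By $U(n)$-invariance they vanish unless the holomorphic and antiholomorphic indices pair up, and a standard integration yields $\int_{\mathbb{S}^{2n-1}} v_i\bar v_j v_k\bar v_l\, d\sigma = c_n(\delta_{ij}\delta_{kl}+\delta_{il}\delta_{kj})$ for an explicit positive constant $c_n$ (one checks $c_n=\tfrac{1}{n(n+1)}$ by testing the contraction $\sum_{i,k}$ against $(\sum_i|v_i|^2)^2=1$). Substituting back gives $\int_{\mathbb{S}^{2n-1}} H\, d\sigma = c_n\sum_{i,k}\big(R_{i\bar i k\bar k}+R_{i\bar k k\bar i}\big)$. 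Here I would invoke the Kähler symmetries \eqref{Ksymm}: applying $R_{i\bar j k\bar l}=R_{k\bar j i\bar l}$ with $j=k$, $l=i$ shows $R_{i\bar k k\bar i}=R_{k\bar k i\bar i}$, so after relabeling both sums equal the scalar curvature $S(p)=\sum_{i,k} R_{i\bar i k\bar k}$. This establishes the averaging identity $S(p)=\frac{1}{2c_n}\int_{\mathbb{S}^{2n-1}} H\, d\sigma$ with $\frac{1}{2c_n}>0$.

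With the averaging identity in hand, both assertions follow immediately. If $H$ is semi-negative everywhere, then $H\le 0$ on $\mathbb{S}^{2n-1}$, so the integral is nonpositive and hence $S(p)\le 0$ for every $p$, which is the first claim. For the second claim, suppose $H(w)<0$ for some unit vector $w\in T_pM$. Since $H$ is a continuous (indeed polynomial) function on the sphere and $H\le 0$ throughout, the strict inequality persists on an open neighborhood of $w$ in $\mathbb{S}^{2n-1}$, a set of positive measure. An integrand that is everywhere nonpositive and strictly negative on a set of positive measure has strictly negative integral, whence $\int_{\mathbb{S}^{2n-1}} H\, d\sigma<0$ and therefore $S(p)<0$. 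The only genuine subtlety is the index bookkeeping in the moment computation together with the correct application of the Kähler symmetry to collapse the two contractions onto $S(p)$; the remaining steps are formal.
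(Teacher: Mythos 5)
Your proof is correct and follows precisely the route the paper indicates: Berger's pointwise averaging formula expressing $S(p)$ as a positive multiple of the mean of $H$ over the unit sphere in $T_pM$, which the paper itself only cites (to \cite{Berger}) and sketches in one sentence rather than proves. Your moment computation with $c_n=\tfrac{1}{n(n+1)}$, the use of the K\"ahler symmetries \eqref{Ksymm} to collapse both contractions onto $S(p)$, and the positive-measure argument for the strict inequality are all sound.
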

Berger's theorem is proven using a pointwise formula expressing the scalar curvature at a point in terms of the average holomorphic sectional curvature on the unit sphere in the tangent space at that point. Based on Berger's theorem, we have the following proposition from \cite[Proposition 2.2]{HLW_JDG}. We reproduce the short proof of the proposition for the convenience of the reader and to provide context for the work in this note.
\begin{proposition}\label{incompatibility_thm}
	Let $M$ be a projective manifold whose first real Chern class is zero. Let $g$ be a K\"ahler metric on $M$ whose holomorphic sectional curvature is semi-negative. Then the holomorphic sectional curvature of $g$ vanishes identically, and $M$ is an abelian variety up to a finite unramified covering.
\end{proposition}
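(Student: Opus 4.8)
The plan is to combine Berger's theorem (Theorem~\ref{berger_theorem}) with a cohomological integration argument: first force the scalar curvature to vanish identically, then deduce from this that the holomorphic sectional curvature vanishes identically, and finally invoke the classical structure theory of flat compact K\"ahler manifolds.

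First I would show $S\equiv 0$. Since $g$ is K\"ahler, the Ricci form $Ric$ is closed and, by the result of Chern recorded above, $\frac{1}{2\pi}[Ric]=c_1(M)$ in $H^2(M,\R)$; by hypothesis this class is zero, so $[Ric]=0$. As $\omega$ is closed as well, the integral $\int_M Ric\wedge\omega^{n-1}$ depends only on the cohomology classes $[Ric]$ and $[\omega^{n-1}]$, and hence vanishes because $[Ric]=0$. Using the pointwise identity $Ric\wedge\omega^{n-1}=\frac{2}{n}S\,\omega^n$ stated above, this yields $\int_M S\,\omega^n=0$. On the other hand, Berger's theorem guarantees $S\le 0$ everywhere. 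A non-positive continuous function whose integral against the volume form $\omega^n$ vanishes must be identically zero, so $S\equiv 0$. I would then obtain $H\equiv 0$ by contraposition using the second half of Berger's theorem: if $H$ did not vanish identically, then (since $H\le 0$ by hypothesis) there would be a point $p$ and a vector $w\in T_pM\setminus\{\vec 0\}$ with $H(w)<0$, and Berger's theorem would force $S(p)<0$, contradicting $S\equiv 0$. Hence $H(v)=0$ for every tangent vector $v$.

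Finally, I would upgrade $H\equiv 0$ to the vanishing of the full curvature tensor and then to the stated structure result. The numerator of \eqref{HSC}, namely the real-valued polynomial $v\mapsto\sum R_{i\bar j k\bar l}v_i\bar v_j v_k\bar v_l$, vanishes identically; by the standard polarization procedure (replacing $v$ by $v+w$, by $v+\sqrt{-1}\,w$, and so on) together with the K\"ahler symmetries \eqref{symm} and \eqref{Ksymm}, this forces every component $R_{i\bar j k\bar l}$ to vanish, so $g$ is flat. A compact flat K\"ahler manifold has universal cover isometric to $\C^n$ with its flat metric, and by Bieberbach's theorems it admits a finite unramified covering by a complex torus $\C^n/\Lambda$; since $M$ is projective, this torus is projective and hence an abelian variety, which proves the proposition.

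The conceptual crux is the passage from the \emph{global} statement $\int_M S\,\omega^n=0$ to the \emph{pointwise} conclusion $H\equiv 0$: this hinges entirely on the pointwise sharpening in Berger's theorem, and without that refinement the vanishing integral alone would only yield $S\equiv 0$. The polarization step is elementary but is the place where the K\"ahler symmetries are indispensable and must be applied with care, while the concluding structure theorem is classical.
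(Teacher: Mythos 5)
Your proof is correct and takes essentially the same approach as the paper: Berger's theorem combined with the identity $\int_M \Ric(g)\wedge\omega^{n-1}=\frac{2}{n}\int_M S\,\omega^n$ and the vanishing of $c_1(M)$ forces $H\equiv 0$, after which flatness and classical structure theory give the finite unramified covering by an abelian variety. The only differences are organizational: the paper runs the integration as a direct contradiction ($0=\int_M \Ric(g)\wedge\omega^{n-1}<0$ once a point with $H(w)<0$ is assumed) rather than first establishing $S\equiv 0$, and it cites Kobayashi--Nomizu for the polarization step and Igusa for the Bieberbach-type structure argument that you carry out by hand.
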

\begin{proof}
	Assume the holomorphic sectional curvature of $g$ does not vanish identically. Then there exists a point $p\in M$ and $w\in T_p M\backslash \{\vec{0}\}$ such that $H(w) < 0$. By Theorem \ref{berger_theorem}, the scalar curvature is non-positive everywhere, and $S(p) < 0$. Thus,
	\begin{align}
		0 & =2\pi \int _M c_1(-K_M) \wedge \omega^{n-1}\nonumber\\
		&  =\int _M \Ric (g)\wedge \omega^{n-1}\nonumber\\
		& =\int_M  \frac 2 n S\, \omega ^n <0 \nonumber,
	\end{align}
	which is a contradiction.\par
	Having shown that the holomorphic sectional curvature of $g$ does vanish identically, it is immediate that $M$ is an abelian variety up to a finite unramified covering. Namely, it is a basic fact that the holomorphic sectional curvature of a K\"ahler metric completely determines the curvature tensor $R$ (\cite[Proposition 7.1, p. 166]{kobayashi_nomizu_ii}). In particular, if $H$ vanishes identically, then $R$ vanishes identically. However, due to \cite{Igusa}, a projective K\"ahler manifold with vanishing curvature tensor admits a finite unramified covering by an abelian variety.
\end{proof}
\subsection{Numerical measures of positivity}\label{pos_notions}
First of all, for the statement of Theorem \ref{main 2} we introduce a measure of degeneracy that does not require the canonical line bundle to be nef, namely the (minimal) dimension of the kernel subspaces of the curvature tensor on $M$. 

\begin{definition}\label{R_ker_def}
	Let $M$ be a K\"ahler manifold of dimension $n$. For $p\in M$, let $L_p$ be the set of all $v\in T_p M$ with $R_{v\bar j k \bar l}=0$ for all $j,k,l$ in some local coordinate system. The set $L_p$  is a linear subspace of $T_p M$ and we set $n_R (M) = n-\min\ \{\dim L_p \mid p\in M\}$.
\end{definition}
\begin{remark}
	Due to the symmetry properties enjoyed by the curvature of a K\"ahler metric, the vector $v$ can be placed in any of the 4 possible positions without affecting the definition.
\end{remark}
Furthermore, we introduce three standard notions of positivity for line bundles from algebraic geometry. Let $L$ be an arbitrary nef line bundle on the projective manifold $M$. Then the {\it numerical dimension of $L$}, which we denote $\nu(L),$ is $\max\{k\in\{0,1,\ldots,\dim M\}: (c_1(L))^k\not =  [0]\}$, where $c_1(L)$ denotes the first real Chern class of $L$. In the case $L=K$, we write $\nu(M)$ for $\nu(K)$, the so-called {\it numerical dimension of $M$} (aka the {\it numerical Kodaira dimension of $M$}). 
\begin{remark}\label{num_dim_zero_rkm}
	It is immediate that $\nu(L)=0$, i.e., $c_1(L)=[0]$, implies that $L$ is numerically trivial. The converse also holds true, which is nicely explained in \cite[Remark 1.1.20]{Laz_I}.\par
\end{remark}
The notion of nef dimension is based on the following theorem (\cite{Tsuji}, \cite{8aut}).
\begin{theorem}
	Let $L$ be a nef line bundle on a normal projective variety $M$. Then there exists an almost holomorphic dominant rational map $f:M\dashrightarrow Y$ with connected fibers, called a ``reduction map,'' such that
	\begin{enumerate}
		\item $L$ is numerically trivial on all compact fibers $F$ of $f$ with $\dim F = \dim M - \dim Y$, and
		\item for every general point $x\in M$ and every irreducible curve $C$ passing through $x$ with $\dim f(C) > 0$, we have $L.C > 0$.
	\end{enumerate}
	The map $f$ is unique up to birational equivalence of $Y$.
\end{theorem}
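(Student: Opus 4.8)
The plan is to build $f$ as the quotient of $M$ by the equivalence relation generated by curves on which $L$ is trivial, and to read off uniqueness directly from property~(2). Call an irreducible curve $C\subset M$ \emph{$L$-trivial} if $L\cdot C=0$ (recall $L\cdot C\ge 0$ always, since $L$ is nef), and declare two points $x,y\in M$ to be \emph{$L$-equivalent} if they lie on a connected chain $C_1\cup\cdots\cup C_r$ of $L$-trivial curves. Property~(2) is precisely the statement that, through a general point, no $L$-trivial curve maps to a positive-dimensional set under $f$; equivalently (contrapositive), every $L$-trivial curve through a general point is contracted by $f$. This is what pins down the fibers of any $f$ satisfying the conclusion as the $L$-equivalence classes and makes the construction canonical.

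Uniqueness is the easy half, and I would dispatch it first. Suppose $f\colon M\dashrightarrow Y$ and $f'\colon M\dashrightarrow Y'$ both satisfy (1) and (2). Let $F$ be a general fiber of $f$; by (1), $L|_F\equiv 0$, so \emph{every} irreducible curve contained in $F$ is $L$-trivial. Since $F$ is a connected projective variety, any two of its points can be joined by a connected chain of irreducible curves lying in $F$, and all these curves are $L$-trivial; choosing the chain through general points, the contrapositive of (2) applied to $f'$ forces $f'$ to be constant along each curve, hence constant on $F$. Thus $F$ is contained in a fiber of $f'$, and by symmetry fibers of $f'$ lie in fibers of $f$. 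The general fibers of the two maps therefore coincide, and since a dominant rational map is determined up to birational equivalence of its target by its general fibers, we get $Y\sim Y'$.

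For existence I would pass to the cycle space. Let $T$ be the union of those components of the Chow scheme $\mathrm{Chow}(M)$ that parametrize $L$-trivial $1$-cycles; having $L$-degree zero is a closed numerical condition, so $T$ is well defined, and its universal family supplies an algebraic family of $L$-trivial curves. One then analyzes the equivalence relation $\sim_L$ generated by this family using the quotient machinery for equivalence relations arising from covering families of cycles developed by Campana and in the Koll\'ar--Miyaoka--Mori circle of ideas: once $\sim_L$ is known to be \emph{geometrically proper} over a suitable Zariski-open $M^{\circ}\subseteq M$ (its classes are compact and vary algebraically), that machinery produces an almost holomorphic dominant map $f\colon M\dashrightarrow Y$ whose general fibers are the $\sim_L$-classes. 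Property~(2) is then immediate from the construction, while property~(1) requires a separate lemma showing $L$ is numerically trivial on a general class; here one must genuinely use that the class is chain-\emph{connected} by $L$-trivial curves and not merely covered by them (the elliptic fibration $\pi\colon S\to B$, whose total space is covered by $L$-trivial fibers for $L=\pi^{*}(\text{ample})$ yet has $L\not\equiv 0$, shows that covering alone is insufficient), together with the nef-ness of $L$.

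The crux, and the step I expect to be the main obstacle, is establishing geometric properness, i.e.\ bounding the number $r$ of curves needed to connect two $L$-equivalent general points; without such a bound the equivalence classes need not be algebraic and no quotient exists. I would obtain the bound by a dimension count: fixing a general $x$, the loci $Z_r(x)$ reachable from $x$ by chains of at most $r$ $L$-trivial curves form an increasing sequence of irreducible subvarieties $\{x\}=Z_0\subseteq Z_1\subseteq\cdots\subseteq M$, so their dimensions, being non-decreasing integers bounded by $n$, stabilize after at most $n$ steps; stabilization of dimension between two nested irreducible closed sets forces equality, so the full class of $x$ is already $Z_r(x)$ for some $r\le n$, giving the uniform chain bound. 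Turning this outline into a proof requires controlling the dimensions and closedness of the incidence varieties inside $T$ and handling the countably many components of $\mathrm{Chow}(M)$ via general-position arguments; this is exactly the technical heart carried out in \cite{Tsuji} and \cite{8aut}, whose properness criterion I would follow to conclude.
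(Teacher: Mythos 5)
First, a point of order: the paper does not prove this statement at all. It is quoted as known background, attributed to \cite{Tsuji} and \cite{8aut}, solely in order to define the nef dimension $n(M)$. There is therefore no in-paper proof to compare against, and your proposal has to stand or fall as a free-standing argument.

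As a roadmap it is faithful to the strategy of those references: quotient by the equivalence relation generated by $L$-trivial curves via Campana/Koll\'ar--Miyaoka--Mori quotient machinery, a chain-connectedness (not merely covering) lemma to get property (1) (your elliptic-fibration example is exactly the right caution), and properness of the relation as the crux. But the crux is where the proposal breaks down. Your dimension count assumes the reachable loci $Z_r(x)$ are irreducible subvarieties; they are neither. Because $L$-trivial curves admit no degree bound against an ample class, they do not form a bounded family: they are parametrized by countably many components of $\mathrm{Chow}(M)$, so each $Z_r(x)$ is a priori only a countable union of closed subvarieties, and ``non-decreasing dimensions stabilize, hence the chain of loci stabilizes'' has no force in that setting. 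This unboundedness is precisely what separates the nef reduction theorem from the rationally connected quotient (where bounded degree gives boundedness of the family) and is why the arguments of \cite{Tsuji} and \cite{8aut} are delicate. Your closing sentence concedes this by deferring ``the technical heart'' to those very papers --- that is, to the proof of the theorem being proved --- so what you have is a correct outline with a citation standing where the proof should be. A smaller gap of the same flavor: in your uniqueness argument, the chain joining two points of a general fiber $F$ need not consist of curves passing through \emph{general} points of $M$, so the contrapositive of (2) for $f'$ cannot be applied link by link without an additional argument.
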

We call $\dim Y$ the {\it nef dimension of $L$}. When we apply the above theorem with $L=K_M$, we call $n(M):=\dim Y$ the {\it nef dimension of $M$}. 

Finally, the following notion is based not just on numerical information, but on actual sections of $L$.
Let $L$ be a line bundle on $M$. The {\it Kodaira-Iitaka dimension} $\kod(L)$ of $L$ is $-\infty$ if $H^0(M,kL)=0$ for all positive integers $k$. Otherwise, it is the degree of $H^0(M,kL)$ as a polynomial in $k$. If $L=K$, then we call $\kod(K)=:\kod(M)$ the Kodaira dimension of $M$. Note that $\kod(L)\in \{-\infty,0,1,2,\ldots,\dim M\}$.

Recall that it follows from the Riemann-Roch theorem and the Kodaira vanishing theorem that for an ample line bundle $L$, we have $\kod(L)=\dim M$.

On a projective manifold $M$ with nef canonical line bundle $K_M$, the following chain of inequalities holds:
\begin{equation} \kod(M)\leq \nu(M) \leq n(M).\label{chain_of_inequ}\end{equation}
The first inequality was established by Kawamata in \cite[Proposition 2.2]{Kawamata_85_pluri_sys} and the second inequality is in \cite[Proposition 2.8]{8aut}. The name Abundance Conjecture is commonly used to refer to the claim that the first inequality is actually an equality, i.e., that Kodaira dimension and numerical dimension of $M$ agree (\cite[Conjecture 7.2]{Kawamata_85_pluri_sys}). \par
Furthermore, it is known that the Abundance Conjecture actually implies $\kod(M)= n(M)$, making \eqref{chain_of_inequ} an all-around equality. For more on this, see \cite[Section 4]{HLW_JDG}.\par
Finally, in the case of a projective K\"ahler manifold with semi-negative holomorphic sectional curvature, we can deduce from \cite[Theorem 1.4]{HLW_JDG} and \cite[Theorems 1.5 and 1.7]{HLWZ} that
\begin{equation}
	n(M) \geq n_R (M) \geq \nu(M). \label{invar_ineq}
\end{equation}
as in this case the property of being {\it truly flat} (\cite[Definition 1.3]{HLWZ}) is equivalent to the property in Definition \ref{R_ker_def}.
\section{Decomposition of the Holomorphic Sectional Curvature into Differences of Squares}
\label{Decomposition}

\subsection{D'Angelo's decomposition into differences of squares}
The main idea behind Theorems \ref{main 1} and \ref{main 2} is that on the tangent space at every point, the holomorphic sectional curvature is a real-valued bihomogeneous complex polynomial of bidegree $(2,2)$. Such a polynomial can be decomposed into a difference of squares as described in \cite{D'Angelo 82}. In that paper, D'Angelo's intention was to study subelliptic estimates for the $\bar \partial$-Neumann problem of a boundary of a finite type domain in $\C^n$ and the relation with orders of contact of complex analytic varieties. We now provide a quick summary of D'Angelo's method, mainly following the monograph \cite{D'Angelobook}.\par
For the purpose of this discussion, we take $z_1,\ldots,z_n$ to be complex variables on $\CC^n$ which we use to form polynomials
\begin{equation*}
r(z,\bar z) = \sum_{\alpha,\beta}c_{\alpha\beta}z^\alpha \bar z^\beta
\end{equation*}
with complex coefficients $c_{\alpha\beta}$. Note that this represents a minor conflict of notation with the earlier stipulation that $z=(z_1,\dots,z_n)$ is used for the coordinates of a local chart of a complex manifold $M$. We make this temporary choice to stay in line at this point with the notation in D'Angelo's work. We hope no significant confusion arises as a result. We also temporarily use the letter $M$ to denote a real algebraic variety in $\CC^n$, in contrast with the use of the same letter for a K\"ahler manifold in Theorems \ref{main 1} and \ref{main 2} and other places. Furthermore, the letter $g$ has different meanings in different places: it denotes the K\"ahler metric but is also used for holomorphic functions in the decompositions into differences of squares.\par

It is elementary to see that $r(z,\bar z)$ is real-valued on $\CC^n$ if and only if the matrix consisting of the coefficients is Hermitian. Furthermore, it is easy to see that $r(z,\bar z)$ vanishes identically on $\CC^n$ if and only if all the coefficients $c_{\alpha\beta}$ are equal to zero. We can use real-valued polynomials to define real algebraic varieties as follows:
\begin{definition}
A {\it real algebraic variety} is a subset $M$ of $\C^n$ which is the zero set of a real-valued polynomial. In other words, $M=\{z\in\C^n \mid r(z,\bar z)=0\}$, where $r$ is a real-valued polynomial on $\C^n$.
\end{definition}
An important special case of a real-valued polynomial is the following:
\begin{definition}
We say that the real-valued polynomial $r$ is {\it bihomogeneous of bidegree $(m,m)$} if 
\begin{equation*}
r(z,\bar z) = \sum_{|\alpha|=|\beta|=m}c_{\alpha\beta}z^\alpha \bar z^\beta.
\end{equation*}
\end{definition}
By a polarization argument, one can prove the following decomposition theorem.
\begin{theorem}
For a real-valued polynomial $r(z,\bar z)$ that is bihomogeneous of bidegree at least $(1,1)$, there exist two sets of homogeneous polynomials $f_1,\dots,f_N$ and $g_1,\dots,g_N$ in $\CC[z_1,\ldots,z_n]$, all having vanishing constant terms, such that
	\begin{equation*}
		r(z,\bar z)=(|f_1(z)|^2+\dots +|f_N(z)|^2)-(|g_1(z)|^2+\dots +|g_N(z)|^2).
	\end{equation*}
\end{theorem}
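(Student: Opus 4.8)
The plan is to reduce the statement to the spectral theorem for Hermitian matrices. First I would fix $m\ge 1$ so that $r$ is bihomogeneous of bidegree $(m,m)$, and enumerate the finitely many monomials of degree $m$ in $z_1,\dots,z_n$ as $M_1(z),\dots,M_K(z)$, collecting them into a column vector $\mathbf{M}(z)=(M_1(z),\dots,M_K(z))^{T}$. Writing $r(z,\bar z)=\sum_{\mu,\nu}c_{\mu\nu}M_\mu(z)\overline{M_\nu(z)}=\mathbf{M}(z)^{T}C\,\overline{\mathbf{M}(z)}$ with $C=(c_{\mu\nu})$, the observation already recorded above --- that $r$ is real-valued precisely when the coefficient matrix $C$ is Hermitian --- is the key structural input.

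Next I would diagonalize. By the spectral theorem there is a unitary matrix $U$ and real eigenvalues $\lambda_1,\dots,\lambda_K$ with $C=U\,\mathrm{diag}(\lambda_1,\dots,\lambda_K)\,U^{*}=\sum_{i=1}^{K}\lambda_i u_i u_i^{*}$, where $u_1,\dots,u_K$ denote the columns of $U$. Substituting this expression for $C$ into $r=\mathbf{M}^{T}C\,\overline{\mathbf{M}}$ and setting $h_i(z)=\sum_{\mu}(u_i)_\mu M_\mu(z)$, which is a homogeneous polynomial of degree $m$, a short computation collapses the double sum into $r(z,\bar z)=\sum_{i=1}^K \lambda_i\,|h_i(z)|^2$, since each rank-one term $u_i u_i^{*}$ contributes the squared modulus of a single linear combination of the monomials.

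Finally I would separate the spectrum by sign. Discarding the indices with $\lambda_i=0$, I set $\sqrt{\lambda_i}\,h_i$ to be the $f$-type polynomials for the indices with $\lambda_i>0$ and $\sqrt{-\lambda_i}\,h_i$ to be the $g$-type polynomials for the indices with $\lambda_i<0$, and then pad the shorter of the two lists with zero polynomials so that both families acquire a common length $N$. This yields exactly $r=(|f_1|^2+\dots+|f_N|^2)-(|g_1|^2+\dots+|g_N|^2)$. Since every $h_i$ is homogeneous of degree $m\ge 1$, so are all the $f_i$ and $g_j$, and in particular each has vanishing constant term, as required.

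I do not expect a genuine obstacle here: the content is purely linear-algebraic, and the only points requiring care are the bookkeeping that identifies the Hermitian coefficient matrix with $r$ (using that the degree-$m$ monomials are linearly independent) and the verification that the eigenvector combinations $h_i$ are honest homogeneous polynomials. If one instead wanted a non-bihomogeneous version of the statement, the vanishing-constant-term claim would require isolating the terms of positive total degree; but in the present bihomogeneous setting it is automatic from homogeneity.
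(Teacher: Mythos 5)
Your proof is correct: the coefficient matrix $C$ of a real-valued bihomogeneous polynomial is Hermitian (the paper records exactly this equivalence just before the theorem), the computation $r=\mathbf{M}^{T}C\,\overline{\mathbf{M}}=\sum_{i}\lambda_{i}|h_{i}(z)|^{2}$ from the spectral decomposition $C=\sum_{i}\lambda_{i}u_{i}u_{i}^{*}$ is valid, and sorting the nonzero eigenvalues by sign, absorbing $\sqrt{|\lambda_{i}|}$ into $h_{i}$, and padding the shorter list with zero polynomials gives the required difference of squares, with the vanishing of constant terms automatic from homogeneity of degree $m\ge 1$. However, your route is genuinely different from the paper's, which (following D'Angelo's monograph) obtains the decomposition by a polarization argument: writing $r(z,\bar z)=\sum_{\alpha}\mathrm{Re}\bigl(z^{\alpha}\,\overline{q_{\alpha}(z)}\bigr)$ with $q_{\alpha}(z)=\sum_{\beta}\overline{c_{\alpha\beta}}\,z^{\beta}$ and applying the identity $4\,\mathrm{Re}(a\bar b)=|a+b|^{2}-|a-b|^{2}$ term by term, so that $f_{\alpha}=\tfrac12(z^{\alpha}+q_{\alpha})$ and $g_{\alpha}=\tfrac12(z^{\alpha}-q_{\alpha})$. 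The polarization argument is more elementary (no eigentheory) and applies verbatim to non-bihomogeneous real polynomials, but it produces one pair $(f_{\alpha},g_{\alpha})$ for every degree-$m$ monomial. Your spectral version buys minimality, which is directly relevant to this paper: if $r=\sum_{i=1}^{N}|f_{i}|^{2}-\sum_{i=1}^{N}|g_{i}|^{2}$, then $C$ is a difference of two positive semidefinite matrices of rank at most $N$, whence $N$ is bounded below by both the number $p$ of positive and the number $q$ of negative eigenvalues of $C$; your construction achieves $N=\max(p,q)$, so it computes the minimal common length. Since the holomorphic sectional curvature square decomposition length of Definition \ref{sqdl_def} is precisely such a minimal $N$ (and congruence $C\mapsto T^{*}CT$ under linear changes of chart preserves the signature), your argument in effect identifies the paper's invariant pointwise with signature data of the curvature coefficient matrix, something the polarization decomposition does not provide.
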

Based on this observation, an important statement can be made about irreducible complex algebraic or analytic varieties contained in real algebraic varieties.
\begin{theorem}\label{superset}
	Let $M$ be a real algebraic variety given by $r(z,\bar z)=(|f_1(z)|^2+\dots +|f_N(z)|^2)-(|g_1(z)|^2+\dots +|g_N(z)|^2)$, which is bihomogeneous of bidegree at least $(1,1)$. Then the family of complex algebraic varieties $V(f-Ug)$, as $U$ varies over all $N\times N$ unitary matrices, contains a superset for each irreducible complex variety contained in $M$ (depending on the complex variety).
\end{theorem}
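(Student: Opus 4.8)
The plan is to show that for each irreducible complex variety $W\subseteq M$ one can find a single $N\times N$ unitary matrix $U$ with $f(z)=Ug(z)$ for all $z\in W$, where $f=(f_1,\dots,f_N)$ and $g=(g_1,\dots,g_N)$; this is precisely the inclusion $W\subseteq V(f-Ug)$, so that the complex algebraic variety $V(f-Ug)$ is the desired superset. To begin, observe that membership $z\in M$ is equivalent to $r(z,\bar z)=0$, i.e.\ to $\|f(z)\|^2=\|g(z)\|^2$, so that $W\subseteq M$ says exactly $\sum_i|f_i(z)|^2=\sum_j|g_j(z)|^2$ for every $z\in W$. The whole point is to upgrade this single ``diagonal'' scalar identity to the full polarized Gram identity $\langle f(z),f(w)\rangle=\langle g(z),g(w)\rangle$ for all pairs $z,w\in W$ (with $\langle a,b\rangle=\sum_i a_i\overline{b_i}$), after which the construction of $U$ becomes pure linear algebra.

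For the polarization step, I would first restrict attention to the smooth locus $W_{\mathrm{reg}}$, which is a connected complex manifold and is dense in the irreducible variety $W$. For a polynomial $h$, write $h^*(\zeta):=\overline{h(\bar\zeta)}$ for the polynomial obtained by conjugating the coefficients, so that $h^*(\bar z)=\overline{h(z)}$ and $h^*$ is again holomorphic. Form the polynomial
\begin{equation*}
\Phi(z,\zeta)=\sum_i f_i(z)f_i^*(\zeta)-\sum_j g_j(z)g_j^*(\zeta),
\end{equation*}
which is holomorphic on $\C^n\times\C^n$ and satisfies $\Phi(z,\bar z)=0$ for every $z\in W_{\mathrm{reg}}$. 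The key observation is that, in holomorphic coordinates coming from a local holomorphic parametrization $u\mapsto z(u)$ of $W_{\mathrm{reg}}$ together with the conjugate parametrization of the conjugate variety $\overline{W}=\{\bar z\mid z\in W\}$, the ``conjugate diagonal'' $\{(z,\bar z)\mid z\in W_{\mathrm{reg}}\}$ corresponds to the standard maximally totally real submanifold $\{v=\bar u\}$ of $\C^d\times\C^d$, where $d=\dim W$. Since a holomorphic function vanishing on a maximally totally real submanifold vanishes identically, and $W_{\mathrm{reg}}\times\overline{W_{\mathrm{reg}}}$ is connected, we conclude $\Phi\equiv 0$ on $W_{\mathrm{reg}}\times\overline{W_{\mathrm{reg}}}$. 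Setting $\zeta=\bar w$ and using continuity to pass from $W_{\mathrm{reg}}$ to $W$ yields the desired identity $\langle f(z),f(w)\rangle=\langle g(z),g(w)\rangle$ for all $z,w\in W$.

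With the Gram identity in hand, let $\mathcal F=\spann_\C\{f(z)\mid z\in W\}$ and $\mathcal G=\spann_\C\{g(z)\mid z\in W\}$ be the linear spans inside $\C^N$, and define a linear map $U_0\colon\mathcal G\to\mathcal F$ by $U_0(g(z))=f(z)$. The Gram identity guarantees that $U_0$ is well defined and isometric: if $\sum_k c_k g(z_k)=0$, then $\langle\sum_k c_k f(z_k),f(w)\rangle=\sum_k c_k\langle g(z_k),g(w)\rangle=0$ for every $w\in W$, so $\sum_k c_k f(z_k)\in\mathcal F$ is orthogonal to all of $\mathcal F$ and hence equals $0$; the same computation gives norm preservation and, by symmetry, injectivity. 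Thus $U_0$ is a Hermitian isometry of $\mathcal G$ onto $\mathcal F$; in particular $\dim\mathcal G=\dim\mathcal F$, so $\mathcal G^\perp$ and $\mathcal F^\perp$ have equal dimension and we may extend $U_0$ to a unitary $U\colon\C^N\to\C^N$ by any isometry $\mathcal G^\perp\to\mathcal F^\perp$. Then $Ug(z)=f(z)$ for all $z\in W$, i.e.\ $W\subseteq V(f-Ug)$, as required.

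I expect the polarization step to be the main obstacle, since it is where the hypothesis that $W$ is a complex variety is essential: the naive attempt to separate $z$ from $\bar z$ fails pointwise, and one must genuinely invoke the identity theorem across the totally real conjugate diagonal sitting maximally inside $W\times\overline{W}$. The only additional technical care is the passage through the singular locus of $W$, handled by working on the dense connected manifold $W_{\mathrm{reg}}$ and invoking continuity, together with the bookkeeping ensuring that the numerator and denominator data are packaged into the same number $N$ of functions, so that a single $N\times N$ unitary $U$ can accommodate both.
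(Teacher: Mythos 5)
Your proof is correct, and it is essentially the argument of D'Angelo that the paper itself relies on: the paper states Theorem \ref{superset} without proof, citing \cite{D'Angelo 82} and \cite{D'Angelobook}, where the proof proceeds exactly as you do --- polarize the identity $\|f(z)\|^2=\|g(z)\|^2$ across the totally real conjugate diagonal in $W\times\overline{W}$ to obtain the Gram identity $\langle f(z),f(w)\rangle=\langle g(z),g(w)\rangle$, then extend the resulting isometry $g(z)\mapsto f(z)$ to a unitary $U$ on $\CC^N$, giving $W\subseteq V(f-Ug)$. Both the polarization step (via the identity theorem on the connected manifold $W_{\mathrm{reg}}\times\overline{W_{\mathrm{reg}}}$) and the linear-algebra step (well-definedness and isometry of $U_0$ from the Gram identity) are carried out correctly.
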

Note that we wrote $f$ for the vector of functions $(f_1,\ldots,f_N)$ and $g$ for the vector of functions $(g_1,\ldots,g_N)$. It is easy to check that $V(f-Ug)\subset M$ for all $N\times N$ unitary matrices $U$.\par

From the defining formula of the holomorphic sectional curvature \eqref{HSC}, combined with symmetry property \eqref{symm} of the full curvature, we have:
\begin{theorem}
	Let $p\in M$ and $T_pM \cong \C^n$ the holomorphic tangent space at $p$. Working in variables $(z_1,\ldots,z_n)$ on the tangent space $T_pM \cong \C^n$, the numerator part of the holomorphic sectional curvature 
	$$\sum_{i,j,k,l} R_{i\bar j k \bar l}z_i\bar z_jz_k\bar z_l=\sum_{i,j,k,l} R_{i\bar j k \bar l}z_iz_k\bar z_j\bar z_l$$ is a real-valued bihomogenous polynomial of bidegree $(2,2)$.
\end{theorem}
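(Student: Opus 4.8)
The plan is to verify the two asserted properties — bihomogeneity of bidegree $(2,2)$ and real-valuedness — separately, after first disposing of the displayed identity. That identity, $\sum_{i,j,k,l} R_{i\bar j k \bar l}z_i\bar z_jz_k\bar z_l=\sum_{i,j,k,l} R_{i\bar j k \bar l}z_iz_k\bar z_j\bar z_l$, is immediate: the factors $z_i,\bar z_j,z_k,\bar z_l$ are complex numbers and hence commute, so the monomial is merely regrouped into its holomorphic part $z_iz_k$ and its antiholomorphic part $\bar z_j\bar z_l$. Write $P(z,\bar z):=\sum_{i,j,k,l} R_{i\bar j k \bar l}(p)\,z_i\bar z_jz_k\bar z_l$ for the numerator in \eqref{HSC}.

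For the bidegree I would simply inspect the monomials. Each summand is a constant $R_{i\bar j k\bar l}(p)$ times $z_iz_k\bar z_j\bar z_l$, which is homogeneous of degree $2$ in $z_1,\ldots,z_n$ and of degree $2$ in $\bar z_1,\ldots,\bar z_n$. Collecting terms according to multi-indices $\alpha$ and $\beta$ with $|\alpha|=|\beta|=2$ puts $P$ in exactly the shape $\sum_{|\alpha|=|\beta|=2}c_{\alpha\beta}z^\alpha\bar z^\beta$ required by the definition of a polynomial bihomogeneous of bidegree $(2,2)$.

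The real content is the real-valuedness, and for this I would invoke the conjugation symmetry \eqref{symm} of the full curvature. Conjugating $P$ term by term, both in the coefficients and in the variables, gives $\overline{P(z,\bar z)}=\sum_{i,j,k,l} \overline{R_{i\bar j k \bar l}}\,\bar z_i z_j\bar z_k z_l$. From \eqref{symm} one reads off $\overline{R_{i\bar j k\bar l}}=R_{j\bar i l\bar k}$, so substituting this and then swapping the dummy indices $i\leftrightarrow j$ and $k\leftrightarrow l$ returns the sum to $\sum_{i,j,k,l} R_{i\bar j k\bar l}\,z_i\bar z_j z_k\bar z_l=P(z,\bar z)$. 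Hence $\overline{P}=P$ pointwise, which is precisely the assertion that $P$ is real-valued. As an alternative, one could appeal to the criterion recorded earlier that a polynomial of this form is real-valued exactly when its coefficient matrix is Hermitian, and check Hermiticity directly from \eqref{symm}.

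There is no genuine obstacle here; the argument is routine bookkeeping. The one point deserving mild care is the index swap in the real-valuedness step, where one must confirm that the transposition interchanging the holomorphic and antiholomorphic index pairs is exactly compensated by \eqref{symm}, so that no ordering discrepancy survives and the conjugated sum coincides term by term with $P$.
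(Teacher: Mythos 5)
Your proposal is correct and follows exactly the route the paper intends: the paper states this theorem without a written-out proof, noting only that it follows ``from the defining formula of the holomorphic sectional curvature \eqref{HSC}, combined with symmetry property \eqref{symm} of the full curvature,'' and your argument---bidegree $(2,2)$ by inspection of the monomials, real-valuedness via $\overline{R_{i\bar j k\bar l}}=R_{j\bar i l\bar k}$ together with the dummy-index swaps $i\leftrightarrow j$, $k\leftrightarrow l$---is precisely the bookkeeping that claim compresses.
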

Note that in the defining formula of holomorphic sectional curvature, it is the numerator that carries most of the information. In particular, the holomorphic sectional curvature is zero if and only if the numerator of the expression \eqref{HSC} is zero.

\subsection{Further auxiliary results in the K\"ahler case}
In several places, it is crucial for our arguments that we work with a K\"ahler metric. We therefore establish further auxiliary results pertaining to the K\"ahler case that we will later exploit. \par
It is well-known (see \cite[Lemma 7.19]{Zhengbook}) that the holomorphic sectional curvature of a K\"ahler metric completely determines the full curvature $R$. The next Lemma considers the case when the holomorphic sectional curvature has a special form, resulting in a corresponding special form of the full curvature. This observation will be used in the proofs of Theorems \ref{main 1} and \ref{main 2}.

\begin{lemma} \label{recovery} Let $x\in M$. If the holomorphic sectional curvature takes the form 
	$$H(v)=\sum_{p=1}^N|\sum_{i, k}f_{ik}^pv_iv_k|^2-\sum_{p=1}^N|\sum_{i, k}g_{ik}^pv_iv_k|^2$$
	with fixed complex numbers $f_{ik}^p, g_{ik}^p$ for all unit holomorphic tangent vectors $v=\sum_{i=1}^n v_i \frac{\partial }{\partial z_i}\in T_xM$, then the full curvature is of the form
	$$R_{i\bar j k \bar l}=\sum_{p=1}^N f_{ik}^p\overline{f_{jl}^p}-\sum_{p=1}^N g_{ik}^p\overline{g_{jl}^p}.$$
\end{lemma}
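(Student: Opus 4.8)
The plan is to reduce the statement to the standard fact, cited earlier, that the holomorphic sectional curvature of a K\"ahler metric completely determines the full curvature tensor (\cite[Proposition 7.1, p. 166]{kobayashi_nomizu_ii}, \cite[Lemma 7.19]{Zhengbook}). First I would observe that for a \emph{unit} vector $v$ the denominator in \eqref{HSC} equals $\left(\sum_{i,j} g_{i\bar j}v_i\bar v_j\right)^2 = 1$, so that the hypothesis reads
\[
\sum_{i,j,k,l} R_{i\bar j k \bar l} v_i\bar v_j v_k\bar v_l = \sum_{p=1}^N\Big|\sum_{i,k}f_{ik}^p v_iv_k\Big|^2 - \sum_{p=1}^N\Big|\sum_{i,k}g_{ik}^p v_iv_k\Big|^2
\]
for all unit $v$. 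Expanding the squares via $\big|\sum_{i,k}f_{ik}^p v_iv_k\big|^2 = \sum_{i,j,k,l} f_{ik}^p\overline{f_{jl}^p}\,v_i\bar v_j v_k\bar v_l$ and introducing
\[
\tilde R_{i\bar j k\bar l} := \sum_{p=1}^N f_{ik}^p\overline{f_{jl}^p} - \sum_{p=1}^N g_{ik}^p\overline{g_{jl}^p},
\]
the two sides become the quartic forms attached to $R$ and to $\tilde R$, and they agree on the unit sphere. Since both forms are bihomogeneous of bidegree $(2,2)$ (each scales by $|\lambda|^4$ under $v\mapsto\lambda v$), agreement on the unit sphere forces the polynomial identity $\sum R_{i\bar j k\bar l}v_i\bar v_jv_k\bar v_l = \sum\tilde R_{i\bar j k\bar l}v_i\bar v_jv_k\bar v_l$ for \emph{all} $v\in\C^n$; this says $R$ and $\tilde R$ induce the same holomorphic sectional curvature.

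To invoke the determination result I need both tensors to satisfy the K\"ahler curvature symmetries. The tensor $R$ does, by \eqref{symm} and \eqref{Ksymm}. For $\tilde R$, the conjugation symmetry $\overline{\tilde R_{j\bar i l\bar k}} = \tilde R_{i\bar j k\bar l}$ holds automatically; to secure the remaining symmetries $\tilde R_{i\bar j k\bar l} = \tilde R_{k\bar j i\bar l} = \tilde R_{i\bar l k\bar j}$ I would first note that only the parts of $f_{ik}^p$ and $g_{ik}^p$ symmetric in the pair $(i,k)$ contribute to the contractions $\sum_{i,k}f_{ik}^p v_iv_k$ and $\sum_{i,k}g_{ik}^p v_iv_k$. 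Hence I may replace $f_{ik}^p,g_{ik}^p$ by their symmetric parts $\tfrac12(f_{ik}^p+f_{ki}^p)$ and $\tfrac12(g_{ik}^p+g_{ki}^p)$ without changing $H$ or the stated form of the conclusion, and with these symmetric coefficients $\tilde R$ acquires exactly the K\"ahler symmetries.

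Finally, applying the cited fact that the holomorphic sectional curvature determines the full curvature --- equivalently, that a K\"ahler-type curvature tensor is recovered from its quartic form by polarization --- to $R$ and $\tilde R$ yields $R_{i\bar j k\bar l} = \tilde R_{i\bar j k\bar l}$, which is the assertion. The one genuinely delicate point is this last uniqueness step: equality of holomorphic sectional curvatures propagates to equality of the full tensors only because of the K\"ahler symmetries, so the \emph{real work} lies in the symmetrization above that legitimizes the application of the recovery result. As an alternative that avoids citing it, one may run the polarization directly on $A := R - \tilde R$, using the symmetries of $A$ together with the identical vanishing of its quartic form (obtained by substituting $v = e_a + se_b + \dots$ and extracting coefficients) to conclude that every $A_{i\bar j k\bar l}$ vanishes.
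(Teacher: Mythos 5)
Your proposal is correct and follows essentially the same route as the paper: the paper's proof likewise takes the candidate tensor $\tilde R_{i\bar j k\bar l}=\sum_p f^p_{ik}\overline{f^p_{jl}}-\sum_p g^p_{ik}\overline{g^p_{jl}}$, checks by direct expansion that its quartic form reproduces $H$, and then invokes the fact that the holomorphic sectional curvature of a K\"ahler metric determines the full curvature. The extra care you take --- passing from the unit sphere to all of $\C^n$ by bihomogeneity, and symmetrizing $f^p_{ik},g^p_{ik}$ in $(i,k)$ so that the candidate tensor actually has the K\"ahler symmetries required by the uniqueness/polarization result --- only makes explicit points that the paper's proof leaves implicit.
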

\begin{proof}
	Due to the fact that holomorphic sectional curvature completely determines the full curvature, we can verify the result by showing that the holomorphic sectional curvature of the above full curvature equals the original holomorphic sectional curvature:
	\begin{align*}
		H(v) & = \sum_{i,j,k,l} R_{i\bar j k \bar l}v_i\bar v_j v _k\bar v_l\\
		& =\sum_{i,j,k,l}(\sum_{p=1}^N f_{ik}^p\overline{f_{jl}^p}-\sum_{p=1}^N g_{ik}^p\overline{g_{jl}^p})v_i\bar v_j v _k\bar v_l\\
		& =\sum_{i,j,k,l}\sum_{p=1}^N f_{ik}^p\overline{f_{jl}^p}v_i\bar v_j v _k\bar v_l-\sum_{i,j,k,l}\sum_{p=1}^N g_{ik}^p\overline{g_{jl}^p}v_i\bar v_j v _k\bar v_l\\
		& =\sum_{p=1}^N\sum_{i,j,k,l} f_{ik}^pv_iv _k\overline{f_{jl}^p}\bar v_j\bar v_l-\sum_{p=1}^N\sum_{i,j,k,l} g_{ik}^pv_iv _k\overline{g_{jl}^p}\bar v_j\bar v_l\\
		& =\sum_{p=1}^N|\sum_{i, k}f_{ik}^pv_iv_k|^2-\sum_{p=1}^N|\sum_{i, k}g_{ik}^pv_iv_k|^2.
	\end{align*}
	So the lemma follows.
\end{proof}
The next two lemmas can also be thought of as (partially) recovering the curvature tensor. They generalize Lemma 2.1 of \cite{HLWZ}, and will be needed for the proofs of Theorems \ref{main 1} and \ref{main 2}.
\begin{lemma}\label{lemma 1} Let $M$ be a K\"ahler manifold and let $p\in M$, $v \in T_pM$. If $R_{v\bar w w \bar w}=0$ for all $w\in T_pM$, then $R_{v\bar x y \bar z}=0$ for all $x, y, z\in T_pM$.
\end{lemma}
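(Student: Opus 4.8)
The plan is to reduce the statement to the vanishing of a single contracted tensor. Fix the vector $v$ and set $A_{\bar j k \bar l}:=\sum_i R_{i\bar j k\bar l}v_i$. Since $R_{v\bar x y\bar z}=\sum_{j,k,l}A_{\bar j k\bar l}\,\bar x_j y_k\bar z_l$ depends linearly on the $A_{\bar j k\bar l}$, the desired conclusion that $R_{v\bar x y\bar z}=0$ for all $x,y,z\in T_pM$ is equivalent to the vanishing $A_{\bar j k\bar l}=0$ for all indices $j,k,l$. Thus the entire problem is to extract this vanishing from the hypothesis $R_{v\bar w w\bar w}=0$.

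Next I would rewrite the hypothesis as a polynomial identity. In terms of $A$, the assumption reads $R_{v\bar w w\bar w}=\sum_{j,k,l}A_{\bar j k\bar l}\,\overline{w_j}\,w_k\,\overline{w_l}=0$ for every $w=\sum_i w_i\frac{\partial}{\partial z_i}\in T_pM$. This is a polynomial of bidegree $(1,2)$ in the quantities $w_1,\dots,w_n,\overline{w_1},\dots,\overline{w_n}$, and by the elementary fact that a polynomial in $z,\bar z$ which vanishes identically on $\C^n$ must have all coefficients zero (recorded in Section \ref{Decomposition}), every coefficient must vanish. The one point to watch is that the two antiholomorphic factors $\overline{w_j}\,\overline{w_l}$ are symmetric, so the monomial $w_k\overline{w_j}\,\overline{w_l}$, for a fixed $k$ and an unordered pair $\{j,l\}$, collects the two contributions $A_{\bar j k\bar l}$ and $A_{\bar l k\bar j}$. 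Vanishing of its coefficient therefore yields only $A_{\bar j k\bar l}+A_{\bar l k\bar j}=0$, rather than the vanishing of each term separately.

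This is where the K\"ahler hypothesis enters, and I expect it to be the crux. The K\"ahler symmetry $R_{i\bar j k\bar l}=R_{i\bar l k\bar j}$ from \eqref{Ksymm} (interchanging the two antiholomorphic slots) contracts against $v$ to give $A_{\bar j k\bar l}=A_{\bar l k\bar j}$. Combining this with $A_{\bar j k\bar l}+A_{\bar l k\bar j}=0$ produces $2A_{\bar j k\bar l}=0$, hence $A_{\bar j k\bar l}=0$ for all $j,k,l$, which finishes the argument. The main obstacle is genuinely the bidegree $(1,2)$ asymmetry of $R_{v\bar w w\bar w}$: without extra symmetry, polarization only detects the symmetric part of $A$ in its two barred indices, and it is precisely the K\"ahler symmetry of the curvature that promotes this to full vanishing. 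This also explains why the statement is not purely formal and requires the metric to be K\"ahler.
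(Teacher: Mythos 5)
Your proof is correct and follows essentially the same route as the paper's: expand $R_{v\bar w w\bar w}$ as a polynomial in $w,\bar w$, extract coefficients (which, due to the symmetry of the two barred slots, only gives the symmetrized vanishing $R_{v\bar j k\bar l}+R_{v\bar l k\bar j}=0$), and then use the K\"ahler symmetry \eqref{Ksymm} to upgrade this to $R_{v\bar j k\bar l}=0$. Your contracted-tensor notation $A_{\bar j k\bar l}$ and your identification of the K\"ahler symmetry as the crux match the paper's argument exactly.
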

\begin{proof}
For $w=(w_1,\dots ,w_n)$, we expand the expression $R_{v\bar w w \bar w}$:
\begin{align*}
& R_{v\bar w w \bar w} \\
=\ & \sum_{i, j, k}R_{v\bar ij\bar k}\bar w_iw_j\bar w_k\\
=\ & \sum_{\alpha, \beta}R_{v\bar \alpha \beta\bar \alpha}\bar w_\alpha w_\beta \bar w_\alpha+\sum_{\alpha<\gamma, \beta}(R_{v\bar \alpha \beta\bar \gamma}+R_{v\bar \gamma \beta\bar \alpha})\bar w_\alpha w_\beta\bar w_\gamma.
\end{align*}	
This is a polynomial in $w, \bar w$ vanishing identically. Therefore, all the coefficients in this polynomial are equal to zero, namely
\begin{align*}
R_{v\bar \alpha \beta\bar \alpha}& =0\quad \forall  \alpha, \beta\\ 
(R_{v\bar \alpha \beta\bar \gamma}+R_{v\bar \gamma \beta\bar \alpha}) & =0  \quad \forall  \alpha, \beta, \gamma \text{ with }\alpha<\gamma.
\end{align*}
By the K\"ahler symmetry properties \eqref{Ksymm}, we have  
\begin{equation*}
R_{v\bar \alpha \beta\bar \gamma}+R_{v\bar \gamma \beta\bar \alpha} = 2 R_{v\bar \alpha \beta\bar \gamma} =2 R_{v\bar \gamma \beta\bar \alpha}=0,
\end{equation*}
proving the lemma.
\end{proof}

\begin{lemma}\label{up}
Let $M$ be a K\"ahler manifold with semi-definite (either positive or negative) holomorphic sectional curvature. Let $p\in M$, $v \in T_pM$. If $R_{x\bar y  v \bar v}=0$ for all $x, y\in T_pM$, then $R_{x\bar y z \bar v}=0$ for all $x, y, z\in T_pM$.
\end{lemma}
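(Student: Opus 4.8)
The plan is to reduce the statement to Lemma \ref{lemma 1} by means of a growth argument that exploits the semi-definiteness hypothesis. Since replacing $R$ by $-R$ interchanges the semi-positive and semi-negative cases while affecting neither the hypothesis nor the conclusion, I may assume the holomorphic sectional curvature is semi-negative. Working in a unitary frame at $p$, the denominator in \eqref{HSC} is positive, so semi-negativity is equivalent to the numerator
\[
Q(w):=\sum_{i,j,k,l} R_{i\bar jk\bar l}w_i\bar w_jw_k\bar w_l=R_{w\bar w w\bar w}
\]
being $\le 0$ for every $w\in T_pM$.

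First I would fix an arbitrary $a\in T_pM$ and a scalar $t\in\C$ and expand $Q(a+tv)$ by multilinearity. Each monomial $t^m\bar t^{\,n}$ with $m,n\in\{0,1,2\}$ collects precisely those components in which $v$ occupies $m$ of the two holomorphic slots and $\bar v$ occupies $n$ of the two antiholomorphic slots. The crucial point is that the hypothesis $R_{x\bar y v\bar v}=0$, combined with the symmetries \eqref{symm} and \eqref{Ksymm}, forces \emph{every} component in which $v$ sits in at least one holomorphic slot and at least one antiholomorphic slot to vanish, since any such component can be rearranged into the form $R_{v\bar v x\bar y}$. Hence all monomials with both $m\ge 1$ and $n\ge 1$ drop out, leaving only five terms:
\[
Q(a+tv)=c_0+c_1t+\overline{c_1}\,\bar t+c_2t^2+\overline{c_2}\,\bar t^{\,2},
\]
where $c_0=R_{a\bar a a\bar a}$, $c_1=2R_{v\bar a a\bar a}$, and $c_2=R_{v\bar a v\bar a}$ (the equalities of the two holomorphic-slot and the two antiholomorphic-slot contributions, and the identification of the conjugate terms, all follow from \eqref{symm} and \eqref{Ksymm}).

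Next I would invoke semi-negativity: $Q(a+tv)\le 0$ for all $t\in\C$. Letting $|t|\to\infty$, if $c_2\neq 0$ one may choose $\arg t$ so that $c_2t^2$ is real and positive, making $2\rp(c_2t^2)$ the dominant term and driving $Q(a+tv)\to+\infty$, a contradiction; thus $c_2=0$. With $c_2=0$ the expression reduces to $c_0+2\rp(c_1t)$, and choosing $\arg t$ so that $c_1t$ is real and positive forces $c_1=0$ by the same reasoning. As $a\in T_pM$ was arbitrary, I conclude $R_{v\bar a a\bar a}=0$ for all $a$. This is exactly the hypothesis of Lemma \ref{lemma 1} (with $w=a$), which yields $R_{v\bar x y\bar z}=0$ for all $x,y,z\in T_pM$. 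Taking complex conjugates and applying \eqref{symm} gives $R_{x\bar v z\bar y}=0$, and swapping the two antiholomorphic slots via \eqref{Ksymm} gives $R_{x\bar y z\bar v}=0$ for all $x,y,z$, as desired.

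The genuinely essential ingredient is the growth argument of the third step, which is where semi-definiteness enters in an indispensable way; without it one cannot in general promote a vector lying in the "pairwise" kernel $R_{x\bar y v\bar v}=0$ to the "single-slot" kernel $R_{x\bar y z\bar v}=0$. By contrast, the combinatorial bookkeeping that isolates the five surviving monomials, and the final passage from $R_{v\bar x y\bar z}=0$ to $R_{x\bar y z\bar v}=0$ via the Kähler symmetries, are routine once the hypothesis has been used to eliminate every mixed component.
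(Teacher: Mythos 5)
Your proposal is correct and takes essentially the same route as the paper: the paper's proof likewise expands $R_{\lambda v+w,\overline{\lambda v+w},\lambda v+w,\overline{\lambda v+w}}$ (your $Q(a+tv)$), uses the hypothesis together with the symmetries \eqref{symm} and \eqref{Ksymm} to eliminate all mixed terms, invokes semi-definiteness to force the quadratic and linear coefficients $R_{v\bar w v\bar w}$ and $R_{v\bar w w\bar w}$ to vanish, and then concludes via Lemma \ref{lemma 1}. Your explicit growth argument as $|t|\to\infty$ merely spells out the step the paper leaves implicit, so the two proofs coincide in substance.
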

\begin{proof}
	For any fixed $w$, we consider the quantity $R_{\lambda v+w, \overline{\lambda v+w}, \lambda v+w, \overline{\lambda v+w}}$. This is a real-valued polynomial in the complex variable $\lambda$ and its conjugate $\bar\lambda$. Using the assumption, we expand it to find:
	\begin{equation*}
		R_{\lambda v+w, \overline{\lambda v+w}, \lambda v+w, \overline{\lambda v+w}}=2Re(\lambda^2R_{v\bar wv\bar w})+2Re(\lambda R_{v\bar w w\bar w})+R_{w\bar ww\bar w}.
	\end{equation*}
	Specifically, note that the fourth order term vanishes, since it equals 
$$\lambda\bar \lambda\lambda\bar \lambda R_{v\bar vv\bar v}.$$ 
and the assumption applies. The third order term vanishes due to the assumption and the symmetry properties \eqref{symm} and \eqref{Ksymm}. Due to the semi-definiteness assumption, both the quadratic and the linear terms of $\lambda$ in the above expression must also vanish, which yields
	\begin{equation*}
		R_{v\bar wv\bar w}=0=R_{v\bar w w\bar w}.
	\end{equation*}
	Furthermore, again by the symmetry properties \eqref{symm} and \eqref{Ksymm}, we have
	\begin{equation*}
		R_{v\bar ww\bar w}=\overline{R_{w\bar ww\bar v}}=0
	\end{equation*}
	and thus
	\begin{equation*}
		R_{w\bar ww\bar v}=0.
	\end{equation*}
	The result follows by Lemma \ref{lemma 1}.
\end{proof}
We close this chapter by introducing the holomorphic sectional curvature square decomposition length.
\begin{definition} \label{sqdl_def}
	For a K\"ahler manifold $M$, its {\it holomorphic sectional curvature square decomposition length} $N$ is defined to be: $N=\max_{x\in M} \min_{\Gamma}$\hfill \{there exists a chart at $x$ such that the holomorphic sectional curvature at $x$ has the form $$H(v)=\sum_{p=1}^\Gamma |\sum_{i, k}f_{ik}^pv_iv_k|^2-\sum_{p=1}^\Gamma|\sum_{i, k}g_{ik}^pv_iv_k|^2$$ 
for any unit holomorphic tangent vector $v=\sum_{i=1}^n v_i \frac{\partial }{\partial z_i}\in T_xM$\}.
\end{definition}

\section{Linear and Quadratic Algebra}
\label{Linandquadalg}

This chapter collects the key linear and quadratic algebra facts that are needed in proving Theorems \ref{main 1} and \ref{main 2}. All vector spaces are assumed to be finite dimensional (complex) vector spaces.
\subsection{Some auxiliary results from linear algebra}

For the linear algebra part, first recall the standard inclusion-exclusion principle for the intersection of two linear subspaces.
\begin{theorem}\label{linear lemma}
	For two linear subspaces $V_1$ and $V_2$ inside a vector space $V,$ the following equality holds:
	\begin{equation*}\label{i-e}
		\dim(V_1\cap V_2)=\dim V_1+\dim V_2-\dim (V_1+ V_2).
	\end{equation*}
\end{theorem}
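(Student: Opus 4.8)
The plan is to prove the formula by the standard basis-extension technique, reducing everything to a single linear-independence verification. First I would set $r = \dim(V_1 \cap V_2)$ and fix a basis $u_1, \ldots, u_r$ of the intersection $V_1 \cap V_2$. Since $V_1 \cap V_2$ is a subspace of $V_1$, I can extend this to a basis $u_1, \ldots, u_r, w_1, \ldots, w_s$ of $V_1$, so that $\dim V_1 = r + s$; likewise, extending the same basis of the intersection inside $V_2$ yields a basis $u_1, \ldots, u_r, x_1, \ldots, x_t$ of $V_2$ with $\dim V_2 = r + t$. The heart of the argument is then the claim that the combined family $\mathcal{B} = \{u_1, \ldots, u_r, w_1, \ldots, w_s, x_1, \ldots, x_t\}$ is a basis of the sum $V_1 + V_2$.

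That $\mathcal{B}$ spans $V_1 + V_2$ is immediate, since every element of $V_1 + V_2$ is a sum of an element of $V_1$ and an element of $V_2$, each of which is already a combination of vectors in $\mathcal{B}$. The substantive step is linear independence. Here I would suppose that a relation $\sum_i a_i u_i + \sum_j b_j w_j + \sum_k c_k x_k = 0$ holds, and isolate the $w$-terms as $\sum_j b_j w_j = -\sum_i a_i u_i - \sum_k c_k x_k$. The left-hand side lies in $V_1$, while the right-hand side lies in $V_2$ (as the $u_i$ and $x_k$ all belong to $V_2$); hence $\sum_j b_j w_j \in V_1 \cap V_2$ and so is expressible in the basis $u_1, \ldots, u_r$ of the intersection. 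Independence of $u_1, \ldots, u_r, w_1, \ldots, w_s$ inside $V_1$ then forces every $b_j = 0$. What remains, $\sum_i a_i u_i + \sum_k c_k x_k = 0$, is a dependence among the basis $u_1, \ldots, u_r, x_1, \ldots, x_t$ of $V_2$, so all $a_i$ and $c_k$ vanish as well.

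With the claim established, a dimension count finishes the proof: $\dim(V_1 + V_2) = r + s + t = (r + s) + (r + t) - r = \dim V_1 + \dim V_2 - \dim(V_1 \cap V_2)$, which is exactly the asserted identity after rearrangement. The only genuinely nontrivial point, and thus the main obstacle (a mild one for a result at this level), is the linear-independence verification in the second paragraph; the crucial idea there is that forcing a vector into the intersection $V_1 \cap V_2$ lets one play the two extended bases against each other. Everything else is routine bookkeeping about basis extensions, justified by the standard fact that any basis of a subspace extends to a basis of the ambient space.
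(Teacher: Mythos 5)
Your proof is correct and complete: the basis-extension argument, the linear-independence verification (pushing $\sum_j b_j w_j$ into $V_1 \cap V_2$ and playing the two extended bases against each other), and the final dimension count are all sound, and the paper's standing assumption that all vector spaces are finite dimensional justifies the basis extensions. Note that the paper itself offers no proof of this statement --- it is recalled as the standard inclusion-exclusion principle for subspaces --- so your argument supplies the standard textbook proof that the authors took as known.
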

For intersections of more than two linear subspaces, we estimate from below the dimension of their intersection as follows.
\begin{theorem}\label{intersection linear}
	For $N$ linear subspaces $V_1,\dots, V_N$ inside a vector space $V,$ the dimension of their intersection satisfies the inequality
	\begin{equation*}
		\dim (V_1\cap\ldots \cap V_N)\ge\dim V_1+\ldots +\dim V_N -(N-1)\dim V.
	\end{equation*}
\end{theorem}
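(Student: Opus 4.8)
The plan is to prove the inequality by induction on the number $N$ of subspaces, using the exact inclusion-exclusion identity of Theorem \ref{linear lemma} as the engine, together with the trivial observation that any sum of subspaces of $V$ is again contained in $V$. This reduces the multi-subspace estimate to repeated application of the two-subspace formula.

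For the base case I would take $N=1$ (where the claimed bound reads $\dim V_1 \ge \dim V_1$ and holds trivially) or, more illustratively, $N=2$. In the latter case Theorem \ref{linear lemma} gives $\dim(V_1\cap V_2)=\dim V_1+\dim V_2-\dim(V_1+V_2)$, and since $V_1+V_2\subseteq V$ we have $\dim(V_1+V_2)\le \dim V$; substituting yields $\dim(V_1\cap V_2)\ge \dim V_1+\dim V_2-\dim V$, which is exactly the desired bound with $N-1=1$.

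For the inductive step, I would assume the inequality holds for any $N-1$ subspaces and set $W=V_1\cap\dots\cap V_{N-1}$. Writing $V_1\cap\dots\cap V_N=W\cap V_N$ and applying the two-subspace case established above gives $\dim(W\cap V_N)\ge \dim W+\dim V_N-\dim V$. Feeding in the inductive hypothesis $\dim W\ge \dim V_1+\dots+\dim V_{N-1}-(N-2)\dim V$ and collecting the $\dim V$ terms produces $\dim(V_1\cap\dots\cap V_N)\ge \dim V_1+\dots+\dim V_N-(N-1)\dim V$, completing the induction.

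There is no genuine obstacle here; the statement is a routine consequence of inclusion-exclusion, and no positivity hypotheses on the individual dimensions are needed, since the bound is vacuous (and hence harmless) whenever its right-hand side is negative. The only point requiring mild care is the bookkeeping of the coefficient of $\dim V$: each application of the two-subspace bound contributes exactly one subtracted copy of $\dim V$, so one should confirm that after the single induction step the accumulated deficit is precisely $(N-2)\dim V+\dim V=(N-1)\dim V$ rather than an off-by-one quantity.
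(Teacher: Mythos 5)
Your proof is correct and follows essentially the same route as the paper's: induction on the number of subspaces, with the two-subspace inclusion-exclusion identity plus the bound $\dim(V_1+V_2)\le\dim V$ as the engine. The only difference is cosmetic indexing (you step from $N-1$ to $N$ where the paper steps from $N$ to $N+1$), so no further changes are needed.
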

\begin{proof}
For the base case $N=2$, we have
\begin{align*}
& \dim(V_1\cap V_2)\\
=\ & \dim V_1+\dim V_2-\dim (V_1+ V_2)\\
\ge\ & \dim V_1+\dim V_2-(2-1)\dim V.
\end{align*} 
	Here, the equality comes from Thereom \ref{i-e}, while the inequality comes from the obvious fact $\dim(V_1+V_2)\le\dim V$.
	
	Then we do induction on $N$:
	\begin{align*}
		& \dim (V_1\cap\ldots \cap V_{N+1})\\
		=\ & \dim ((V_1\cap\ldots \cap V_N)\cap V_{N+1})\\
		\ge\ &\dim (V_1\cap\ldots \cap V_N)+\dim V_{N+1}-\dim V\\
		\ge\ & \dim V_1+\ldots +\dim V_N -(N-1)\dim V+\dim V_{N+1}-\dim V\\
		=\ &\dim V_1+\ldots +V_{N+1}-N\dim V.
	\end{align*}
	The first inequality is by the base case $N=2$, and the second equality is by the induction assumption. This gives the result.
\end{proof}
\begin{remark}
	Considering the coordinate hyperplanes in $\C^n$ shows that this inequality is sharp.
\end{remark}
From this inequality, we deduce a condition to guarantee the non-zeroness of the intersection of linear subspaces which will be used in proving Theorem \ref{main 1}.
\begin{corollary}\label{nonzero intersection linear}
	If $N$ linear subspaces $V_1,\dots, V_N$ inside a vector  space $V$  satisfy the dimension inequality $\dim V_1+\ldots +\dim V_N> (N-1)\dim V,$ then they have non-zero intersection.
\end{corollary}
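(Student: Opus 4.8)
The plan is to read off the result directly from the lower bound established in Theorem \ref{intersection linear}. That theorem gives
\begin{equation*}
	\dim (V_1\cap\ldots \cap V_N)\ge\dim V_1+\ldots +\dim V_N -(N-1)\dim V,
\end{equation*}
so the entire content of the corollary is to observe what the hypothesis says about the right-hand side of this inequality.

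First I would invoke the hypothesis $\dim V_1+\ldots +\dim V_N> (N-1)\dim V$, which is precisely the statement that the right-hand side above is strictly positive. Combining this with Theorem \ref{intersection linear} yields $\dim (V_1\cap\ldots \cap V_N) > 0$. Since the dimension of a vector space is a non-negative integer, strict positivity forces $\dim (V_1\cap\ldots \cap V_N)\ge 1$. A subspace of positive dimension contains a non-zero vector, so $V_1\cap\ldots \cap V_N\neq\{0\}$, which is the desired conclusion.

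There is essentially no obstacle here: the corollary is a formal consequence of the previously proven dimension estimate, and the only point that requires (trivial) care is the passage from ``strictly positive dimension'' to ``contains a non-zero vector,'' which uses the integrality of vector space dimensions. I would keep the argument to a couple of sentences rather than reproving the inequality of Theorem \ref{intersection linear}.
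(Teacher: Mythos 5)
Your proposal is correct and follows exactly the paper's own argument: apply the lower bound of Theorem \ref{intersection linear}, note that the hypothesis makes the right-hand side strictly positive, and conclude that the intersection has positive dimension and hence contains a non-zero vector. No gaps and no meaningful difference in approach.
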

\begin{proof}
	We have
	\begin{equation*}
		\dim (V_1\cap\ldots \cap V_N)\ge\dim V_1+\ldots +\dim V_N -(N-1)\dim V>0
	\end{equation*}
	where the first inequality comes from Theorem \ref{intersection linear} and the second inequality comes from the assumption. So the result follows.
\end{proof}
More generally, we have the following condition to guarantee a prescribed minimum dimension of intersection of linear subspaces, which will be used in proving Theorem \ref{main 2}.
\begin{corollary}\label{nonzero intersection linear 2}
	If $N$ linear subspaces $V_1,\dots, V_N$ inside a vector space $V$  satisfy the dimension inequality $\dim V_1+\ldots +\dim V_N> (N-1)\dim V+k$, then they have at least a $(k+1)$-dimensional linear subspace in common.
\end{corollary}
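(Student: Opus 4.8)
The plan is to deduce this directly from Theorem \ref{intersection linear}, exactly paralleling the proof of Corollary \ref{nonzero intersection linear} but now retaining the additive constant $k$ throughout. This corollary is the general-$k$ version of that earlier one, which is recovered by setting $k=0$, so I expect no new ideas to be required beyond carefully tracking integers.

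First I would invoke Theorem \ref{intersection linear} to obtain the lower bound
\begin{equation*}
\dim (V_1\cap\ldots \cap V_N)\ge\dim V_1+\ldots +\dim V_N -(N-1)\dim V.
\end{equation*}
Next I would substitute the hypothesis $\dim V_1+\ldots +\dim V_N> (N-1)\dim V+k$ into the right-hand side, which gives
\begin{equation*}
\dim (V_1\cap\ldots \cap V_N) > (N-1)\dim V + k - (N-1)\dim V = k.
\end{equation*}
The only point requiring a word of justification is the final step: since the dimension of a vector space is a non-negative integer, the strict inequality $\dim (V_1\cap\ldots \cap V_N) > k$ upgrades to $\dim (V_1\cap\ldots \cap V_N) \ge k+1$. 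Hence the intersection $V_1\cap\ldots \cap V_N$ is itself a common linear subspace of dimension at least $k+1$, and any $(k+1)$-dimensional subspace of it witnesses the claim.

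Since the argument is a one-line chain of inequalities together with the integrality remark, there is no genuine obstacle to overcome; the substance of the result lies entirely in Theorem \ref{intersection linear}, whose sharpness is already recorded in the preceding remark. The value of stating the corollary separately is that it isolates the exact form in which the estimate feeds into the proof of Theorem \ref{main 2}, where a prescribed minimum dimension $k+1$ of a common zero subspace of the relevant quadratic forms is precisely what is needed.
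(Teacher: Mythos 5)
Your proof is correct and is essentially identical to the paper's own argument: both apply Theorem \ref{intersection linear}, insert the hypothesis to get $\dim(V_1\cap\ldots\cap V_N)>k$, and conclude by integrality that the intersection itself is a common subspace of dimension at least $k+1$. Your explicit remark about upgrading the strict inequality to $\ge k+1$ is a welcome (if minor) clarification that the paper leaves implicit.
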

\begin{proof}
	This time the inequality becomes:
	\begin{equation*}
		\dim (V_1\cap\ldots \cap V_N)\ge\dim V_1+\ldots +\dim V_N -(N-1)\dim V>k,
	\end{equation*}
	where the change is in the second inequality, which comes from the new assumption. The result follows again.
\end{proof}
\subsection{Some auxiliary results about quadrics}
For the quadratic algebra aspect, we are interested in maximal linear subspaces contained in quadrics.
\begin{theorem}\label{max dim linear quadric}
	For a quadric of rank $r$ in $\C^n$, the dimension of any linear subspace contained in the quadric is no greater than  $(n-r)+\lfloor\frac{r}{2}\rfloor$.
\end{theorem}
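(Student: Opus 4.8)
The plan is to split off the radical of the quadratic form from its nondegenerate part, bounding the contribution of each separately. Write the quadric as the zero locus of a quadratic form $Q$ on $V=\C^n$, and let $B$ be the symmetric bilinear form polarizing $Q$, so that $Q(z)=B(z,z)$ (we are in characteristic zero); by definition the rank of the quadric is $\rank B = r$. Let $K=\{z\in V : B(z,w)=0 \text{ for all } w\in V\}$ be the radical, so that $\dim K = n-r$. The point is that the $n-r$ radical directions are automatically contained in the quadric and account for the summand $n-r$, while the remaining isotropic directions live in a nondegenerate rank-$r$ quadric and account for the summand $\lfloor r/2\rfloor$.

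First I would check that $Q$ descends to the quotient $V/K$: for $k\in K$ one has $Q(z+k)=Q(z)+2B(z,k)+Q(k)=Q(z)$, so $Q$ is constant on cosets of $K$ and induces a quadratic form $\bar Q$ on $V/K\cong\C^r$ whose polarization is nondegenerate, i.e. $\bar Q$ has rank $r$. In particular $K$ itself is contained in the quadric. Now let $L\subseteq V$ be a linear subspace on which $Q$ vanishes identically, and consider the quotient map $q\colon V\to V/K$ restricted to $L$. Its kernel is $L\cap K$, so $\dim(L\cap K)\le\dim K=n-r$, and its image $q(L)$ is a subspace of $V/K$ on which $\bar Q$ vanishes, since every element of $q(L)$ is $q(z)$ for some $z\in L$ and $\bar Q(q(z))=Q(z)=0$. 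By rank--nullity, $\dim L=\dim q(L)+\dim(L\cap K)$, so the theorem reduces to bounding $\dim q(L)$ by $\lfloor r/2\rfloor$.

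The one substantive input, which I expect to be the crux (though it is short), is the bound on the dimension of a totally isotropic subspace $U$ of the nondegenerate form $\bar Q$ on $\C^r$. If $\bar Q$ vanishes on $U$, then for all $u,u'\in U$ the polarization gives $\bar B(u,u')=\tfrac12\bigl(\bar Q(u+u')-\bar Q(u)-\bar Q(u')\bigr)=0$ because $u+u'\in U$; hence $U\subseteq U^{\perp}$ with respect to $\bar B$. Nondegeneracy of $\bar B$ forces $\dim U^{\perp}=r-\dim U$, so $\dim U\le r-\dim U$, giving $\dim U\le\lfloor r/2\rfloor$. Applying this to $U=q(L)$ and combining with the previous paragraph yields $\dim L\le\lfloor r/2\rfloor+(n-r)=(n-r)+\lfloor r/2\rfloor$, as desired. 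Alternatively, the isotropic-subspace bound can simply be cited from the Griffiths--Harris reference already used in Example \ref{theta_div_example}. The only thing to be careful about is that the radical contribution and the isotropic contribution add rather than overlap, which is exactly what passing to the quotient $V/K$ guarantees.
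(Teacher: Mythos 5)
Your proof is correct, and it uses the same overall decomposition as the paper's: pass to the quotient of $\C^n$ by the kernel (radical) $K$ of the form to reduce to a nondegenerate quadric, then bound totally isotropic subspaces of that nondegenerate form. The only divergence is in how the nondegenerate bound is established. The paper argues by contradiction: a subspace of dimension $\lfloor \frac{n}{2}\rfloor+1$ inside a full-rank quadric in $\C^n$ would, in an adapted basis, force the Gram matrix to have a zero upper-left block of size $\lfloor \frac{n}{2}\rfloor+1$, whose rows then lie in a coordinate subspace of strictly smaller dimension and are therefore dependent, so the determinant vanishes, contradicting full rank. You instead argue directly: polarization shows an isotropic subspace $U$ satisfies $U\subseteq U^{\perp}$, and nondegeneracy gives $\dim U^{\perp}=r-\dim U$, hence $\dim U\le\lfloor\frac{r}{2}\rfloor$. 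These carry the same mathematical content --- the paper's zero block is precisely the statement $U\subseteq U^{\perp}$, and producing it tacitly uses the same polarization identity you write out --- so this is a repackaging of the key step rather than a genuinely different route; both texts also point to the same Griffiths--Harris reference as an alternative. One small point in your favor: you make explicit the rank--nullity bookkeeping $\dim L=\dim q(L)+\dim(L\cap K)$ that combines the radical contribution $n-r$ with the isotropic contribution $\lfloor\frac{r}{2}\rfloor$, whereas the paper compresses this into a single sentence about applying the full-rank case to images under the projection map.
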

\begin{proof}
	We first give a proof of the theorem for the case of a full rank quadric, i.e., $r=n$, then pass to the general quadric by forming a quotient. For the full rank case, a proof can be found in \cite[p. 735]{GH}. However, we wish to provide a different and elementary proof here which is more in line with the methods used in this note. Namely, we assume there exists a linear subspace of dimension $\lfloor\frac{n}{2}\rfloor+1$ in order to derive a contradiction. Take a basis of this subspace and complement it to a basis of $\CC^n$. In the new basis, the matrix of the quadric will have the form
	\[\begin{pmatrix}
		0 & *\\
		* & *
	\end{pmatrix},\]
	where the square submatrix pertaining to the first $\lfloor\frac{n}{2}\rfloor+1$ rows and columns is $0$. However, such a matrix is easily seen to have determinant equal to zero, contradicting the assumption of full rank.\par
	For arbitrary rank $r$, we take the quadric's kernel space $K\subset \C^n$, which has dimension $n-r$, and form the corresponding quadric in the quotient space $\C^n/K$, which now becomes a full rank quadric. Applying the proof of the full rank case to images of linear subspaces under the projection map $\C^n\to \C^n/K$ finishes the proof. \end{proof}
From Theorem \ref{intersection linear} together with Theorem \ref{max dim linear quadric}, we deduce a condition to guarantee non-zero intersection of kernels of several quadrics, which will be used in proving Theorem \ref{main 1}.
\begin{corollary}\label{intersection ker}
	Let $Q_1,\dots ,Q_N$ be quadrics in $\C^n$. If they share a linear subspace of $\dim=\lfloor\frac{Nn}{N+1}\rfloor+1$, then their kernels have nonzero intersection.
\end{corollary}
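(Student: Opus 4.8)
The plan is to estimate the intersection of the kernels inside the common subspace $L$ itself rather than inside the ambient $\C^n$, since the naive estimate in $\C^n$ turns out to be too lossy (this is explained in the final paragraph). Write $d=\lfloor\frac{Nn}{N+1}\rfloor+1=\dim L$, let $r_i=\rank Q_i$, and let $K_i$ denote the kernel of $Q_i$, so that $\dim K_i=n-r_i$. First I would set $W_i=L\cap K_i$ and consider the quotient projection $\pi_i\colon\C^n\to\C^n/K_i$. Since $Q_i$ vanishes on $L$ and descends to a full-rank quadric on the $r_i$-dimensional quotient, the image $\pi_i(L)$ is a linear subspace of that full-rank quadric, so the full-rank case of Theorem \ref{max dim linear quadric} gives $\dim\pi_i(L)\le\lfloor r_i/2\rfloor$. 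Applying rank--nullity to $\pi_i|_L$, whose kernel is exactly $W_i$, yields $\dim W_i=d-\dim\pi_i(L)\ge d-\lfloor r_i/2\rfloor$.

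Next I would eliminate the unknown $r_i$. Because $L\subseteq Q_i$ is itself a $d$-dimensional linear subspace of a rank-$r_i$ quadric, Theorem \ref{max dim linear quadric} applied to $Q_i$ gives $d\le(n-r_i)+\lfloor r_i/2\rfloor$, which rearranges to $\lfloor r_i/2\rfloor\le n-d$. Substituting this into the bound from the first step produces the clean, $r_i$-free estimate $\dim W_i\ge 2d-n$ for every $i$. Now all the $W_i$ lie in $L$, so I would apply Theorem \ref{intersection linear} inside $L$ (ambient dimension $d$):
$$\dim\Bigl(\bigcap_{i=1}^N W_i\Bigr)\ge\sum_{i=1}^N\dim W_i-(N-1)d\ge N(2d-n)-(N-1)d=(N+1)d-Nn.$$
Since $d=\lfloor\frac{Nn}{N+1}\rfloor+1>\frac{Nn}{N+1}$, we have $(N+1)d>Nn$, and as the left-hand side is an integer, $(N+1)d-Nn\ge1$. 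Hence $\bigcap_i W_i\neq\{0\}$, and because $\bigcap_i W_i\subseteq\bigcap_i K_i$, the kernels have nonzero intersection.

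The crux of the argument --- and the step easiest to miss --- is the choice to intersect inside $L$ rather than inside $\C^n$. If one instead applies Theorem \ref{intersection linear} to the full kernels $K_i$ in $\C^n$, one only obtains $\dim(\bigcap_i K_i)\ge n-\sum_i r_i$, and combined with $r_i\le 2(n-d)$ this is positive only when $d>n-\frac{n}{2N}$, a condition strictly stronger than the hypothesis $d>\frac{Nn}{N+1}$, which therefore fails for large $n$. Passing to $L$ replaces the ambient dimension $n$ by $d$ in the $(N-1)$-term of the inclusion--exclusion estimate, and this is precisely the improvement that makes the threshold $\frac{Nn}{N+1}$ come out correctly; verifying the two invocations of Theorem \ref{max dim linear quadric} (once on $Q_i$ directly, once on its full-rank quotient) is then routine.
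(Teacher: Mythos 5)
Your proposal is correct and takes essentially the same approach as the paper: both arguments establish $\dim(\Ker Q_i\cap L)\ge \dim L-\lfloor r_i/2\rfloor$, bound $\lfloor r_i/2\rfloor\le n-\dim L$ via Theorem \ref{max dim linear quadric}, and then crucially intersect the subspaces $\Ker Q_i\cap L$ inside $L$ itself (the paper does this through Corollary \ref{nonzero intersection linear}, you through Theorem \ref{intersection linear} directly). The only cosmetic difference is how the key estimate is derived --- you use rank--nullity for the quotient map $\C^n\to\C^n/\Ker Q_i$, while the paper uses inclusion--exclusion inside a maximal linear subspace $L_i$ containing both $L$ and $\Ker Q_i$ --- and the resulting numerics are identical.
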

\begin{proof}
	Denote the common linear subspace by $L$. For each quadric $Q_i$, $L$ is contained in one of its maximal linear subspaces, say $L_i$. Thus we have two containment relations:
	\begin{equation}\label{L contained1}
		L\subseteq L_i,
	\end{equation}
	\begin{equation}\label{Ker contained1}
		\Ker Q_i\subseteq L_i.
	\end{equation}
	Note that the containment \eqref{Ker contained1} comes from the maximality condition on $L_i$. The containment 	\eqref{L contained1} combined with Theorem \ref{max dim linear quadric} yields
	\begin{equation*}
		\dim L\le(n-r_i)+\lfloor\frac{r_i}{2}\rfloor.
	\end{equation*}
	Removing the round down $\lfloor\cdot \rfloor$ then leads to a bound for $r_i$:
	\begin{equation}\label{bound r1}
		r_i\le 2(n-\dim L).
	\end{equation}
	On the other hand, applying Theorem \ref{linear lemma} to \eqref{L contained1} and \eqref{Ker contained1} yields
	\begin{equation*}
		\dim(\Ker Q_i\cap L)\ge \dim L-\lfloor\frac{r_i}{2}\rfloor.
	\end{equation*}
	Summing all the inequalities from $1$ to $N$ yields
	\begin{equation*}
		\dim(\Ker Q_1\cap L)+\dots+\dim(\Ker Q_N\cap L)\ge N\dim L-(\lfloor\frac{r_1}{2}\rfloor+\dots+\lfloor\frac{r_N}{2}\rfloor).
	\end{equation*}
	Removing the round down $\lfloor\cdot \rfloor$ and combining with \eqref{bound r1} yields
	\begin{equation}\label{interstep}
		\dim(\Ker Q_1\cap L)+\dots+\dim(\Ker Q_N\cap L)\ge 2N \dim L-Nn.
	\end{equation}
	Furthermore, combining with the assumption $\dim L=\lfloor\frac{Nn}{N+1}\rfloor+1$ yields
	\begin{equation*}
		\dim(\Ker Q_1\cap L)+\dots+\dim(\Ker Q_N\cap L)>(N-1)\dim L.
	\end{equation*}
	Now Corollary \ref{nonzero intersection linear} gives the result.
\end{proof}
\begin{remark} The statement of Corollary \ref{intersection ker} is sharp with respect to the dimension of the shared linear subspace. This can be seen from the following example.
\end{remark}

\begin{example}\label{sharp_ex} To simplify the notation, we let $\eta=\lfloor\frac{Nn}{N+1}\rfloor$, and define the following quadrics in terms of coordinates $z_1,\ldots,z_n$ on $\CC^n$.
	\begin{align*}
		Q_1\ =\ &\{z_1z_{\eta+1}+z_{2}z_{\eta+2}+\dots+z_{n-\eta}z_n=0\}\\
		Q_2\ =\ &\{z_{n-\eta+1}z_{\eta+1}+\dots+z_{2(n-\eta)}z_n=0\}\\
		\vdots\ & \\
		Q_{\lfloor\frac{\eta}{n-\eta}\rfloor}\ =\ &\{z_{(\lfloor\frac{\eta}{n-\eta}\rfloor-1)(n-\eta)+1}z_{\eta+1}+\dots+z_{\eta}z_{2\eta-(\lfloor\frac{\eta}{n-\eta}\rfloor-1)(n-\eta)}=0\}\\
		Q_{\lfloor\frac{\eta}{n-\eta}\rfloor+1}\ =\ & \C^n\\
		\vdots\ & \\
		Q_N\ =\ &  \C^n.
	\end{align*}
	Note that for the purpose of this example, we trivially consider $\C^n$ to be the quadric given by the zero matrix and that this construction is valid due to
	\begin{equation*}
		\lfloor\frac{Nn}{N+1}\rfloor\le N(n-\lfloor\frac{Nn}{N+1}\rfloor).
	\end{equation*}
	Furthermore, all these quadrics share the common linear subspace
	\begin{equation*}
		L=\{(z_1,\dots,z_n)\in \C^n|z_{\eta+1}=0,\dots,z_n=0\}.
	\end{equation*}
	On the other hand, their kernels are
	\begin{align*}
		\Ker(Q_1)\ = \ &\left\{(z_1,\dots,z_n)\in \C^n| z_1=0,\dots,z_{n-\eta}=0,\right. \\
		&\left. z_{\eta+1}=0,\dots, z_n=0\right\}\\
		\Ker(Q_2)\ = \ & \left\{(z_1,\dots,z_n)\in \C^n|z_{n-\eta+1}=0,\dots,z_{2(n-\eta)}=0,\right. \\
		&\left. z_{\eta+1}=0,\dots, z_n=0\right\}\\
		\vdots\ &\\	
		\Ker(Q_{\lfloor\frac{\eta}{n-\eta}\rfloor})\ = \ & \left \{(z_1,\dots,z_n)\in\C^n|(z_{\lfloor\frac{\eta}{n-\eta}\rfloor-1)(n-\eta)+1}=0,\dots,z_{\eta}=0,\right.\\
		&\left.z_{\eta+1}=0,\dots,z_{2\eta-(\lfloor\frac{\eta}{n-\eta}\rfloor-1)(n-\eta)}=0\right\}\\
\Ker( Q_{\lfloor\frac{\eta}{n-\eta}\rfloor+1})\ = \ & \CC^n \\
\vdots\ &\\	
\Ker( Q_N)\ = \ & \CC^n 		
\end{align*}
	and have zero intersection according to the way the indexing is arranged.
\end{example}

More generally, we have the following condition to guarantee a prescribed minimum dimension for the intersection of kernels of several quadrics, which will be used in proving Theorem \ref{main 2}. This condition can be seen to be sharp with an example analogous to Example \ref{sharp_ex}.
\begin{corollary}\label{intersection ker 2}
	Let $Q_1,\dots ,Q_N$ be quadrics in $\C^n$. If they share a linear subspace of dimension equal to $\lfloor\frac{Nn+k}{N+1}\rfloor+1$, then their kernels have a $(k+1)$-dimensional linear subspace in common.
\end{corollary}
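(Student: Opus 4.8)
The plan is to mimic the proof of Corollary \ref{intersection ker} almost verbatim, the only structural change being that the final appeal to Corollary \ref{nonzero intersection linear} is replaced by its refined counterpart Corollary \ref{nonzero intersection linear 2}. First I would denote the common linear subspace by $L$, so that $\dim L=\lfloor\frac{Nn+k}{N+1}\rfloor+1$. For each quadric $Q_i$, since $L\subseteq Q_i$, the subspace $L$ is contained in some maximal linear subspace $L_i\subseteq Q_i$, and maximality forces $\Ker Q_i\subseteq L_i$ as well. Writing $r_i$ for the rank of $Q_i$, Theorem \ref{max dim linear quadric} applied to $L\subseteq L_i$ yields $\dim L\le(n-r_i)+\lfloor\frac{r_i}{2}\rfloor$, and dropping the floor gives the rank bound $r_i\le 2(n-\dim L)$, exactly as in \eqref{bound r1}.

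The middle of the argument is then identical. Applying the inclusion-exclusion principle (Theorem \ref{linear lemma}) to the two subspaces $\Ker Q_i$ and $L$ of $L_i$ produces $\dim(\Ker Q_i\cap L)\ge\dim L-\lfloor\frac{r_i}{2}\rfloor$; summing over $i=1,\dots,N$, discarding the floors, and inserting $r_i\le 2(n-\dim L)$ reproduces the intermediate estimate
$$\dim(\Ker Q_1\cap L)+\dots+\dim(\Ker Q_N\cap L)\ge 2N\dim L-Nn,$$
which is precisely \eqref{interstep}. Nothing about this chain of inequalities uses the specific value of $\dim L$, so it transfers unchanged to the present setting.

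The one genuinely new ingredient is the final count. Here I would feed the hypothesis $\dim L=\lfloor\frac{Nn+k}{N+1}\rfloor+1>\frac{Nn+k}{N+1}$ into the right-hand side $2N\dim L-Nn$ of \eqref{interstep}. A short manipulation shows $2N\dim L-Nn>(N-1)\dim L+k$ is equivalent to $(N+1)\dim L>Nn+k$, which is exactly the strict inequality we just obtained. Since all the subspaces $\Ker Q_i\cap L$ live inside the ambient space $L$ of dimension $\dim L$, Corollary \ref{nonzero intersection linear 2} applied to these $N$ subspaces inside $L$ then furnishes a $(k+1)$-dimensional linear subspace common to all of them, hence contained in $\bigcap_{i=1}^N\Ker Q_i$, which is the desired conclusion.

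I do not anticipate any real obstacle here beyond bookkeeping: all the substantive work was already carried out in the quadric rank bound (Theorem \ref{max dim linear quadric}) and the dimension-counting corollaries of Section \ref{Linandquadalg}. The single point deserving a line of care is confirming that the strict inequality $\dim L>\frac{Nn+k}{N+1}$ is not spoiled by the two floor-removals, but this is automatic: removing a floor only weakens the lower bounds being summed, and $\dim L$ was chosen precisely to exceed $\frac{Nn+k}{N+1}$ by the elementary fact that $\lfloor x\rfloor+1>x$ for every real $x$.
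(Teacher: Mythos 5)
Your proof is correct and takes essentially the same route as the paper: the paper's own proof simply reuses the intermediate inequality \eqref{interstep} from the proof of Corollary \ref{intersection ker}, combines it with the new hypothesis $\dim L=\lfloor\frac{Nn+k}{N+1}\rfloor+1$ to get $\dim(\Ker Q_1\cap L)+\dots+\dim(\Ker Q_N\cap L)>(N-1)\dim L+k$, and then invokes Corollary \ref{nonzero intersection linear 2}, exactly as you do. Your re-derivation of \eqref{interstep}, the explicit check that the needed strict inequality is equivalent to $(N+1)\dim L>Nn+k$, and the remark that Corollary \ref{nonzero intersection linear 2} is applied with $L$ as the ambient space merely spell out steps the paper leaves implicit.
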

\begin{proof}
	This time, the assumption becomes $\dim L=\lfloor\frac{Nn+k}{N+1}\rfloor+1$, and combining it with the inequality \eqref{interstep} leads us to
	\begin{equation*}
		\dim(\Ker Q_1\cap L)+\dots+\dim(\Ker Q_N\cap L)>(N-1)\dim L+k.
	\end{equation*}
	Then Corollary \ref{nonzero intersection linear 2} yields the result.
\end{proof}

\section{Proof of Theorem \ref{main 1}}
\label{mthm_proof}

We now proceed to the proof of Theorem \ref{main 1}.
\begin{proof}
	By the assumption on the square decomposition length of the holomorphic sectional curvature, at any point $x$ in the K\"ahler manifold $M$, we can find a chart such that the numerator of the holomorphic sectional curvature has the expression:
	$$H(v)=\sum_{p=1}^N|\sum_{i, k}f_{ik}^pv_iv_k|^2-\sum_{p=1}^N|\sum_{i, k}g_{ik}^pv_iv_k|^2 $$
	for a tangent vector $v=\sum_{i=1}^n v_i \frac{\partial }{\partial z_i}$ at $x$.
	Therefore, the zero set of $H$ in $T_xM$ is 
	$$\{(v_1,\dots,v_n)\in \CC^n \mid \sum_{p=1}^N\mid \sum_{i, k}f_{ik}^pv_iv_k|^2-\sum_{p=1}^N|\sum_{i, k}g_{ik}^pv_iv_k|^2=0\}.$$
	Now, we proceed by contradiction. 
	
	Assume that the statement of Theorem \ref{main 1} is false. Then by the definition of $r_0$ as in Definition \ref{defr0}, this means at every point $x\in M$, there is a linear subspace of dimension $n-(n-\lfloor\frac{N}{N+1}n\rfloor)+1=\lfloor\frac{Nn}{N+1}\rfloor+1$, such that the holomorphic sectional curvature is zero along this subspace. Take such a linear subspace of dimension $\lfloor\frac{Nn}{N+1}\rfloor+1$, and denote it by $L$.
	
	By Theorem \ref{superset}, this complex linear subspace $L$, as a complex variety contained in a real algebraic variety defined by a bihomogenous bidegree $(2,2)$ polynomial $H$, must be contained in $V(f-Ug)$ for some $N\times N$ unitary matrix $U=(u_j^p)$. Write this out explicitly:
	\begin{equation*}
		L\subseteq \{(v_1,\dots,v_n)\in\C^n|\sum_{i, k}f^p_{ik}v_iv_k-\sum_{j=1}^N\sum_{i, k}u^p_jg^j_{ik}v_iv_k=0, \forall p=1,\dots, N \}.
	\end{equation*}
	After a change of the order of the summation, the condition here is equivalent to
	\begin{equation}\label{Nquadrics}
		\sum_{i, k}(f^p_{ik}-\sum _{j=1}^Nu^p_jg^j_{ik})v_iv_k=0\qquad\text{for all } p=1,\dots\,N.
	\end{equation}
	These $N$ quadrics in $\C^n=T_xM$ share the linear subspace $L$ of dimension equal to $\lfloor\frac{Nn}{N+1}\rfloor+1$. By Corollary \ref{intersection ker}, there is a non-zero vector $v$ in the  intersection of their kernels. In other words, there exists
	\begin{equation}\label{v}
		v=(v_1,\dots,v_n)\ne 0  \text{ such that } \sum_{k}(f^p_{ik}-\sum_{j=1}^Nu^p_jg^j_{ik})v_k=0 \text{ for all } i \text{ and } p.
	\end{equation}
	On the other hand, from Lemma \ref{recovery}, we can recover the full curvature to be
	$$R_{i\bar j k \bar l}=\sum_{p=1}^N f_{ik}^p\overline{f_{jl}^p}-\sum_{p=1}^N g_{ik}^p\overline{g_{jl}^p}.$$
	Next, we verify that for all $i, j$, the common kernel element $v$ of those $N$ quadrics will give us a zero of $R_{i\bar j v \bar v}$:
	\begin{align*}
		R_{i\bar j v \bar v} & = \sum_{k, l}(\sum_{p=1}^N f_{ik}^p\overline{f_{jl}^p}-\sum_{p=1}^N g_{ik}^p\overline{g_{jl}^p})v_kv_l\\
		& =\sum_{p=1}^N\sum_{k, l}f^p_{ik}v_k\overline{f^p_{jl}}\overline{v_l}-\sum_{p=1}^N\sum_{k, l}g^p_{ik}v_k\overline{g^p_{jl}}\overline{v_l}\\
		&=\sum_{p=1}^N\sum_{k, l, m, n}u^p_mg^m_{ik}v_k\overline{u^p_n}\overline{g^n_{jl}}\overline{v_l}-\sum_{p=1}^N\sum_{k, l}g^p_{ik}v_k\overline{g^p_{jl}}\overline{v_l}\\
		&=0.
	\end{align*}
	The second equality is substitution by \eqref{v}, and the last equality follows from the fact that $U$ is a unitary matrix.
	
	So for this non-zero $v$ we have $R_{i\bar j v \bar v}=0$ for all $i, j$. By Lemma \ref{up} this implies
	\begin{equation*}
		R_{i\bar j k \bar v}=0\quad\text{for all}\ i, j, k.
	\end{equation*}
	Taking the trace with respect to the first two indices we get
	\begin{equation*}
		\sum_{i,j} g^{i\bar j} R_{i\bar j k \bar v}=0\quad\text{for all}\ k.
	\end{equation*}
	Then it follows that the determinant
	\begin{equation*}
		\det(\sum_{i,j} g^{i\bar j} R_{i\bar j k\bar l})=0,
	\end{equation*}
	which is equivalent to $Ric^n(x)=0$. Since this happens at all points $x\in M$, we obtain $c_1^n(M)=0$. However, this is a contradiction, as we assumed the first Chern class of $M$ to be either positive or negative definite.
\end{proof}

\section{Proof of Theorem \ref{main 2}}
\label{lower_rank_case}

The proof of Theorem \ref{main 2} differs from that of Theorem \ref{main 1} only by making a slightly more general argument in the handling of the kernel vectors. The essence of the proof remains the same.

\begin{proof}
	We begin by noting that in case $n_R=0$, the lower bound in the Theorem equals
	$$n-\lfloor\frac{Nn+(n-n_{R}(M))}{N+1}\rfloor=n-\lfloor\frac{Nn+n}{N+1}\rfloor=n-\lfloor\frac{n(N+1)}{N+1}\rfloor=n-n=0$$
	and the theorem is vacuous. In fact, if $n_R=0$, then the entire curvature tensor and all curvatures derived from it vanish identically. In particular, $r_0=0$.\par
	In case $n_R>0$, we again proceed by contradiction and assume that the statement of Theorem \ref{main 2} is false. Then by the definition of $r_0$, this means at every point $x\in M$, there is a linear subspace of dimension $n-(n-\lfloor\frac{Nn+(n-n_{R})}{N+1}\rfloor)+1=\lfloor\frac{Nn+(n-n_{R})}{N+1}\rfloor+1$, such that the holomorphic sectional curvature is zero along this subspace. \par
	The arguments in the proof of Theorem \ref{main 1} and an application of Corollary \ref{intersection ker 2} show that for every $x\in M$, there is a subspace $L$ of dimension at least $(n-n_R)+1$ in $T_x M$ such that for all $v \in L$, we have $R_{i\bar j v \bar v}=0$ for all $i, j$, and thus by Lemma \ref{up}
	\begin{equation*}
		R_{i\bar j k \bar v}=0\quad\text{for all}\quad i, j, k.
	\end{equation*}
	This is a contradiction to the definition of $n_R$.	
\end{proof}

\section{Sharpness}
\label{Sharpness}

\subsection{The local point of view}\label{local_view}
From the local point of view, the bound obtained in Theorem \ref{main 1} (and also in Theorem \ref{main 2}) can readily seen to be sharp. In order to do so, we produce here a full curvature tensor locally such that the bound of \ref{main 1} is attained. And since we are working from a local point of view, after a local change of coordinate if necessary, we can assume from the beginning that we are working in normal coordinates, i.e., $g_{i\bar j}=\delta_{i j}$ at the origin.\par

Also, since the full curvature and the holomorphic sectional curvature determine each other, we produce a holomorphic sectional curvature $H$ locally first, and then recover from it the full curvature. The definition of this $H$ is based on Example \ref{sharp_ex}. Again, to simplify the notation, we set $\eta=\lfloor\frac{Nn}{N+1}\rfloor$. In coordinates $z_1,\ldots,z_n$ on $\CC^n$, let
\begin{align*}
	H=&\left|z_1z_{\eta+1}+z_{2}z_{\eta+2}+\dots+z_{n-\eta}z_n\right|^2+\dots+\\
	&\left|z_{(\lfloor\frac{\eta}{n-\eta}\rfloor-1)(n-\eta)+1}z_{\eta+1}+\dots+z_{\eta}z_{2\eta-(\lfloor\frac{\eta}{n-\eta}\rfloor-1)(n-\eta)}\right|^2.
\end{align*}
Note that this $H$ is in the form of a sum of squares (as opposed to the more general  difference of squares), so it is clearly semi-positive definite. As in Example \ref{sharp_ex}, the zero set of $H$ is
\begin{equation*}
	\{z\in\C^n \mid  z_{\eta+1}=0,\dots,z_n=0\}.
\end{equation*}
This is a linear subspace in $\C^{n}$ of dimension $n-\lfloor\frac{Nn}{N+1}\rfloor$, which is the bound in Theorem \ref{main 1}. On the other hand, when we rewrite $H$ into
\begin{multline*}
	H=\left(z_1z_{\eta+1}+\dots+z_{n-\eta}z_n\right)\left(
	\bar z_1\bar z_{\eta+1}+\dots+\bar z_{n-\eta}\bar z_n\right)+\ldots+\\
	\left(z_{(\lfloor\frac{\eta}{n-\eta}\rfloor-1)(n-\eta)+1}z_{\eta+1}+\dots+z_{\eta}z_{2\eta-(\lfloor\frac{\eta}{n-\eta}\rfloor-1)(n-\eta)}\right)\\
	\left(\bar z_{(\lfloor\frac{\eta}{n-\eta}\rfloor-1)(n-\eta)+1}\bar z_{\eta+1}+\dots+\bar z_{\eta}\bar z_{2\eta-(\lfloor\frac{\eta}{n-\eta}\rfloor-1)(n-\eta)}\right)
\end{multline*}
and multiply everything out, we can observe: for $i\ne j$, terms of the form $z_iz_k\bar z_j\bar z_k$ do not appear in $H$, so from the recovery formula in Lemma \ref{recovery}, when $i\ne j$, $R_{i\bar jk\bar k}=0$. On the other hand, for all $i$, there is exactly one index $k$ such that $z_iz_k\bar z_i\bar z_k$ appears (with a coefficient of $1$), so $R_{i\bar ik\bar k}=1$, and for $l\not = k$, $z_iz_l \bar z_i\bar z_l$ does not appear.\par

Consequently, for the Ricci curvature we have
\[Ric(e_i,e_j)=\sum_{k, l} g^{k\bar l}R_{i\bar jk\bar l}=\sum_k R_{i\bar jk\bar k}=\left\{ \begin{array}{rl}1,~\text{for } i=j\\
	0,~\text{for } i\ne j.
\end{array} \right.\]
The second equality comes from the normal coordinate, the final equality comes from the above observation for the full curvature.
So the $Ric$ matrix is just the identity matrix at the origin, and, perhaps somewhat surprisingly given the many zeros of $H$, it is positive definite. This gives the desired example.

\subsection{The global point of view}
In order to provide an actual example of a compact manifold that establishes the sharpness of Theorem \ref{main 1} (and Theorem \ref{main 2}), it seems tempting to generalize the construction from Example \ref{theta_div_example} to higher codimension. In Example \ref{theta_div_example}, for a point $p\in M$ and a neighborhood $U$ of $p$ in $A$ with coordinates $z_0, z_1,\ldots,z_n$ such that $p$ is the origin and $T_pM=\left(\frac{\partial}{\partial z_0}\right)^\perp$, we wrote $M$ as a graph defined by $z_0=f(z_1,\ldots,z_n)$ for some holomorphic function $f$ with $f(0,\ldots,0)=0$ and $df(0,\ldots,0)=0$. The induced metric on $M$ is the graph metric which is given in components as
$$g_{i\overline{j}} = \delta_{i j} +f_i\overline{f_j},$$
where $f_i = \frac{\partial f}{\partial z_i}$. From this, we obtained that the components of the curvature tensor at the point $p$ are
\begin{equation*}R_{i\overline{j} k \overline{l}}=-f_{ik}\overline{f_{jl}}.
\end{equation*}\par
The generalized situation	discussed in this note is the one where the local representation as a graph is of the form
$$w_1= f^1(z_1,\ldots,z_n), \ldots,w_c=f^c(z_1,\ldots,z_n),$$
and the induced graph metric is given in components as
$$g_{i\overline{j}} = \delta_{i j} +\sum_{s=1}^c f^s_i\overline{f^s_j},$$
with the curvature tensor at the point $p$ being
\begin{equation*}R_{i\overline{j} k \overline{l}}=\sum_{s=1}^c -f^s_{ik}\overline{f^s_{jl}}.
\end{equation*}
It appears to be unclear however how such a high codimension embedding can be chosen so that the holomorphic functions $f^1,\ldots,f^c$ have exactly the highly specific behavior exhibited in Section \ref{local_view}. \par
In other words, it seems tempting based on this note to conjecture that there is no bound in Theorem \ref{main 1} that only depends on the dimension of $n$, and in particular does not depend on the holomorphic sectional curvature square decomposition length, other than the obvious bound $r_0 \geq 1$. A high codimensional example as described above would constitute a proof of this conjecture. We hope to return to this question in the future.

\begin{acknowledgement}
This article is based on the first author's Ph.D. dissertation, written under the direction of the second author at the University of Houston.
\end{acknowledgement}

\end{document}